\title{Topological games and Alster spaces}
\author[L. F. Aurichi]{Leandro F. Aurichi$^1$}
\thanks{$^1$ Partially supported by FAPESP (2010/16939-6)}
\address{Instituto de Ci\^encias Matem\'aticas e de Computa\c c\~ao,
Universidade de S\~ao Paulo, Caixa Postal 668,
S\~ao Carlos, SP, 13560-970, Brazil}
\email{aurichi@icmc.usp.br}
\author[R. R. Dias]{Rodrigo R. Dias$^2$}
\thanks{$^2$ Supported by FAPESP (2012/09214-0)}
\address{Instituto de Matem\'atica e Estat\'istica,
Universidade de S\~ao Paulo, Caixa Postal 66281,
S\~ao Paulo, SP, 05315-970, Brazil}
\email{roque@ime.usp.br}
\keywords{topological games, selection principles, Alster spaces,
  Menger spaces, Rothberger spaces, Menger game, Rothberger game,
  compact-open game, $G_\delta$-topology}
\subjclass[2010]{Primary 54D20; Secondary 54G99, 54A10}
\begin{document}

\begin{abstract}

In this paper we study connections between topological games such
as Rothberger, Menger and compact-open, and relate these games to
properties involving covers by $G_\delta$ subsets. The results
include:
(1) If Two has a winning strategy in the Menger
game on a regular space $X$, then $X$ is an Alster space. 
(2) If Two has a winning strategy in the Rothberger game on a
topological space $X$, then the $G_\delta$-topology on $X$ is
Lindel\"of.
(3) The Menger game and the compact-open game are (consistently) not
dual.

\end{abstract}

\dedicatory{Dedicated to Ofelia T. Alas on occasion of her 70th
  birthday}

\maketitle

\newtheorem{defin}{Definition}[section]

\newtheorem{prop}[defin]{Proposition}

\newtheorem{prob}[defin]{Problem}

\newtheorem{lemma}[defin]{Lemma}

\newtheorem{corol}[defin]{Corollary}

\newtheorem{example}[defin]{Example}

\newtheorem{thm}[defin]{Theorem}

\section{Topological games}

We start by recalling some definitions.
%\footnote{Although most of the
%  properties considered were originally introduced under different
%  names, here we adopt the terminology more commonly used nowadays ---
%  see e.g. \cite{fm}.}
The following properties were introduced in studies of
strong measure zero and $\sigma$-compact metric spaces, respectively.

\begin{defin}[Rothberger \cite{roth}]
\label{def.roth}

A topological space $X$ is said to be a \emph{Rothberger space} if,
for every sequence $(\mathcal U_n)_{n\in\omega}$ of open covers of
$X$, there is a sequence $(U_n)_{n\in\omega}$ satisfying
$X=\bigcup_{n\in\omega}U_n$ with
$U_n\in\mathcal U_n$ for all $n\in\omega$.

\end{defin}

\begin{defin}[Hurewicz \cite{hur}]
\label{def.menger}

A topological space $X$ is said to be a \emph{Menger space} if,
for every sequence $(\mathcal U_n)_{n\in\omega}$ of open covers of
$X$, there is a sequence $(\mathcal F_n)_{n\in\omega}$ satisfying
$X=\bigcup\bigcup_{n\in\omega}\mathcal F_n$ with
$\mathcal F_n\in[\mathcal U_n]^{<\aleph_0}$ for all $n\in\omega$.

\end{defin}

The following topological games are naturally associated to the
above properties.

\begin{defin}[Galvin \cite{galvin}]
\label{def.roth-game}

The \emph{Rothberger game} in a topological space $X$ is played
according to the following rules. In each inning $n\in\omega$, 
One chooses an open cover $\mathcal U_n$ of $X$, and then  Two
chooses $U_n\in\mathcal U_n$. The play is won by Two if
$X=\bigcup_{n\in\omega}U_n$; otherwise, One is the winner.

\end{defin}

\begin{defin}[Telg\'arsky \cite{topsoe}]
\label{def.menger-game}

The \emph{Menger game} in a topological space $X$ is played as
follows. In each inning $n\in\omega$,  One chooses an open cover
$\mathcal U_n$ of $X$, and then  Two chooses a finite subset
$\mathcal F_n$ of $\mathcal U_n$. Two wins the play if
$\bigcup_{n\in\omega}\mathcal F_n$ is a cover of $X$; otherwise,
One is the winner.

\end{defin}

It is easy to see that, if One does not have a winning strategy in the
Rothberger (resp. Menger) game in a topological space $X$, then $X$ is
a Rothberger (resp. Menger) space. The following theorems show that
these properties can in fact be expressed in terms of such games.

\begin{thm}[Pawlikowski \cite{pawl}]
\label{pawl}

A topological space $X$ is Rothberger if and only if  One does
not have a winning strategy in the Rothberger game on $X$.

\end{thm}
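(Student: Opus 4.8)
The easy implication is exactly the remark recorded just before the statement: if $X$ fails to be Rothberger, witnessed by a sequence $(\mathcal U_n)_{n\in\omega}$ of open covers for which no selection covers, then One wins by the \emph{predetermined} strategy that plays $\mathcal U_n$ at inning $n$ regardless of Two's moves. Thus One not having a winning strategy already forces $X$ to be Rothberger. The plan is therefore to prove the converse: \emph{if $X$ is Rothberger then One has no winning strategy}, or equivalently, that every strategy $\sigma$ for One can be defeated by Two.

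So I would fix a strategy $\sigma$ for One, assume $X$ is Rothberger, and set up the tree of legal partial plays consistent with $\sigma$: a node is a finite sequence $\langle U_0,\dots,U_{n-1}\rangle$ of Two's moves with $U_0\in\sigma(\langle\rangle)$ and $U_i\in\sigma(\langle U_0,\dots,U_{i-1}\rangle)$ for each $i$, and at such a node One is prescribed the cover $\sigma(\langle U_0,\dots,U_{n-1}\rangle)$. Since the Rothberger property implies the Menger property, which in turn implies Lindel\"ofness, every cover One plays has a countable subcover; restricting Two to choose only from one fixed such subcover can only help Two, so I may assume that all of One's covers are countable. This makes the tree countably branching, hence countable, and I can index its nodes by $\omega^{<\omega}$, writing $\mathcal C_s=\{C_{s,m}:m\in\omega\}$ for the enumerated cover One plays at node $s$ and $s^\frown m$ for the child at which Two answers $C_{s,m}$. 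An infinite branch $f\in\omega^\omega$ then yields a play consistent with $\sigma$, and Two defeats $\sigma$ precisely when some branch $f$ satisfies $\bigcup_{k\in\omega}C_{f\restriction k,\,f(k)}=X$.

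The heart of the argument, and the step I expect to be the main obstacle, is to produce such a branch from the Rothberger property. The difficulty is that the selection principle is \emph{non-adaptive}: it returns one set per cover from a list fixed in advance, whereas along a branch the cover available at each inning depends on Two's earlier choices. Applying the selection principle once and globally, say to the level unions $\bigcup_{s\in\omega^k}\mathcal C_s$, yields covering sets scattered across incomparable branches rather than along a single one, so it does not localize. To overcome this I would apply Rothberger recursively \emph{inside the tree}, building the branch together with its covering selection by a fusion in which, at each stage, the selection principle is used on a diagonal sequence of covers drawn from the subtree below the node reached so far, so that the returned sets can be realized as Two's subsequent legal moves on the \emph{same} branch. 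Arranging the bookkeeping so that every point of $X$ is eventually caught by a move actually played along the constructed branch --- that is, transferring the flat covering guarantee into coverage localized to one play --- is exactly where the real work lies. An alternative route, which I would keep in reserve, is to pass to Galvin's point-open game, whose complementary formulation makes this localization more transparent, and then transfer the conclusion back.
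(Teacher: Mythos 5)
There is a genuine gap here, and it is the entire substance of the theorem. Your easy direction is correct, and your reduction to strategies in which One plays only countable covers is valid (Rothberger implies Lindel\"of, and any play defeating the restricted strategy is also a play defeating the original one --- though note your phrase ``can only help Two'' has the logic backwards; the correct point is that it suffices for Two to defeat the restricted strategy), giving the countably branching tree indexed by $\omega^{<\omega}$. But the hard implication --- that the \emph{non-adaptive} selection principle $\mathsf S_1(\mathcal O,\mathcal O)$ defeats every \emph{adaptive} strategy --- is precisely what you do not prove: you name the obstacle (coverage must be localized to a single branch) and then defer it to a ``fusion'' with ``bookkeeping'', which you yourself describe as the place where the real work lies. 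The natural implementations of that sketch fail. If you enumerate the nodes $(s_k)_{k\in\omega}$ of the tree and apply the Rothberger property to the sequence of covers $(\mathcal C_{s_k})_{k\in\omega}$, the selected sets cover $X$ but sit on pairwise incomparable nodes; they determine a branch (follow, at each node, the set selected there), yet nothing guarantees that the sets met along that single branch cover $X$. Upgrading the hypothesis first --- a Rothberger space admits selections catching every point infinitely often, obtained by partitioning the innings into infinitely many infinite sets --- does not help either: the guarantee remains global over the tree rather than local to one play. Any recursion that must commit to Two's next move before knowing which points the remainder of the play will be responsible for runs into the same circularity, because the covers available later depend on the moves already made. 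Breaking this circularity is exactly the content of Pawlikowski's argument in \cite{pawl}, and it is a genuinely intricate piece of infinite combinatorics, not routine bookkeeping; the question, raised in effect by Galvin in \cite{galvin}, remained open until Pawlikowski's 1994 paper.

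For comparison with the source: there is no ``paper's proof'' to match, because the paper states this result as an attributed theorem and uses it as a black box --- precisely because the proof is substantial and cannot be recovered by a short argument. Your fallback of passing to the point-open game via Theorem \ref{dual-galvin} changes nothing: by that duality, showing One has no winning strategy in the Rothberger game is the same task as showing Two has no winning strategy in the point-open game, so the localization difficulty reappears verbatim rather than becoming ``more transparent''.
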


\begin{thm}[Hurewicz \cite{hur}]
\label{menger}

A topological space $X$ is Menger if and only if  One does
not have a winning strategy in the Menger game on $X$.

\end{thm}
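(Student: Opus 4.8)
The two implications are of very different character. The implication ``One has no winning strategy $\Rightarrow X$ is Menger'' is exactly the observation recorded just before the statement: given a sequence $(\mathcal U_n)_{n\in\omega}$ of open covers, the strategy for One that plays $\mathcal U_n$ at inning $n$ regardless of Two's moves cannot be winning, so some play defeats it, and the finite sets Two plays in that play constitute a Menger selection for $(\mathcal U_n)_{n\in\omega}$. Thus the whole content lies in the converse, which I would prove in contrapositive form: from a winning strategy $\sigma$ for One I would manufacture a single sequence of open covers admitting no Menger selection, i.e.\ I would show that a winning $\sigma$ witnesses the failure of the Menger property.

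The first reduction is that every Menger space is Lindel\"of: applying Definition~\ref{def.menger} to the constant sequence $\mathcal U_n=\mathcal U$ turns an arbitrary open cover $\mathcal U$ into a countable subcover $\bigcup_{n}\mathcal F_n$. I would use this to make $\sigma$ \emph{countably branching}. Concretely, for $s\in\omega^{<\omega}$ I define simultaneously a legal partial play $p_s$ following $\sigma$, the cover $\mathcal U_s=\sigma(p_s)$ with which One answers, a countable subcover $\{W^s_k:k\in\omega\}$ of $\mathcal U_s$ furnished by Lindel\"ofness, and Two's candidate responses $\mathcal F_{s^\frown n}=\{W^s_0,\dots,W^s_n\}$, whose unions $V^s_n:=\bigcup\mathcal F_{s^\frown n}$ increase with $n$ and satisfy $\bigcup_{n}V^s_n=X$; then $p_{s^\frown n}$ extends $p_s$ by the move $\mathcal F_{s^\frown n}$. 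Every $f\in\omega^\omega$ now names a full play against $\sigma$ in which Two's moves union to $\bigcup_{n}V^{f\restriction n}_{f(n)}$, and since $\sigma$ is winning this union must miss a point of $X$ for every $f$. My aim is to contradict this by producing one branch $f$ whose play covers $X$, spending the Menger hypothesis globally on the tree.

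The finish I have in mind reads off the tree an $\omega$-sequence of countable open covers $(\mathcal W_n)_n$, each $\mathcal W_n$ assembled from the candidate sets $V^s_k$ attached to level-$n$ nodes, applies the Menger property to obtain finite selections $\mathcal G_n\subseteq\mathcal W_n$ with $\bigcup_n\mathcal G_n=X$, notes that each finite $\mathcal G_n$ involves only finitely many level-$n$ nodes, and invokes K\"onig's lemma on the resulting finitely branching tree of ``used'' nodes to extract an infinite branch $f$. The genuinely hard part is \emph{coherence}. A single end-of-construction application of Menger yields sets scattered over pairwise incomparable nodes, and K\"onig's lemma then delivers a branch without any guarantee that the moves \emph{along that branch} already cover $X$. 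So the real work is to set up the covers $\mathcal W_n$ with a built-in coherence condition, choosing the sets fed into the selection principle so that coverage is inherited along every branch of the used-node tree; this is where the argument lives. The delicacy is unavoidable in principle: one cannot build $f$ by a naive stage-by-stage recursion that steers Two toward a cover, for that would exhibit a \emph{winning strategy for Two}, a property strictly stronger than being Menger (equivalently, than One's lacking a winning strategy). Hence the Menger hypothesis must be consumed on the whole sequence at once, and the art is precisely in the encoding that allows a single global selection to be threaded back into one branch, contradicting that $\sigma$ is winning.
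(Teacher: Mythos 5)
Your proposal does not actually prove the theorem. The parts you do carry out are correct: the ``tactic'' argument for the implication (One has no winning strategy $\Rightarrow$ Menger), the observation that Menger implies Lindel\"of, and the resulting normalization of a strategy $\sigma$ for One into a countably branching tree indexed by $\omega^{<\omega}$, with Two's candidate moves being initial segments of countable subcovers, are all standard and sound. But for the substantive direction you stop exactly where the theorem has its content. You concede that applying the Menger property to level-by-level covers $\mathcal W_n$ and then invoking K\"onig's lemma yields selected sets scattered over pairwise incomparable nodes, with no guarantee that the moves along any single branch cover $X$; your remedy --- covers ``with a built-in coherence condition'' so that ``coverage is inherited along every branch'' --- is only announced, never defined, and never verified. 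Since no candidate covers are exhibited, there is no argument to check: what you have written is a correct plan together with an accurate diagnosis of the obstacle, not a proof. You are right, incidentally, that the difficulty cannot be dodged by a stage-by-stage recursion steering Two toward a cover, since that would produce a winning strategy for Two, which is strictly stronger than Menger (by Theorem \ref{telg.metr}, any non-$\sigma$-compact Menger metrizable space witnesses the difference). That correct observation makes the omission more conspicuous, not less: the device that converts one global selection into one winning play for Two \emph{is} the mathematical content of Hurewicz's theorem.

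For comparison, note that the paper itself offers no proof of Theorem \ref{menger}: it is quoted as Hurewicz's theorem with a citation to \cite{hur}, so there is no in-paper argument for yours to diverge from. A complete proof (see also Scheepers' treatment in \cite{sch1}) does require a genuinely nontrivial construction at precisely the point you left open: the covers fed to the Menger property must be manufactured from the strategy tree in such a way that a single selected set certifies coverage at many positions simultaneously, so that the finitely many sets chosen at each stage can be threaded back into one legal play against $\sigma$. Until you supply such a construction and verify it, the backward implication remains unproved.
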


A more systematic study of combinatorial properties in topological
spaces was initiated by M. Scheepers in \cite{sch1}. Scheepers
introduced a framework for investigating some classes of properties
and their naturally associated games in greater generality, which has
originated the subject of \emph{selection principles}.
One such general selection principle and its associated game are
defined as follows.

\begin{defin}[Scheepers \cite{sch1}]
\label{def.S1}

Let $\mathcal{A}$ and $\mathcal{B}$ be nonempty families.
$\mathsf{S}_1(\mathcal{A},\mathcal{B})$ denotes the following
statement:
\begin{quote}
For every sequence $(A_n)_{n\in\omega}$ of elements of $\mathcal{A}$,
there is a sequence $(B_n)_{n\in\omega}$ such that
$B_n\in A_n$ for each $n\in\omega$ and
$\{B_n:n\in\omega\}\in\mathcal{B}$.
\end{quote}

\end{defin}

\begin{defin}[Scheepers \cite{sch3}]
\label{def.G1}

Let $\mathcal{A}$ and $\mathcal{B}$ be nonempty families with
$\emptyset\notin\mathcal A$. The game
$\mathsf{G}_1(\mathcal{A},\mathcal{B})$ is played as follows. In each
inning $n\in\omega$,  One chooses $A_n\in\mathcal A$, and then
 Two chooses $B_n\in A_n$. Two wins the play if
$\{B_n:n\in\omega\}\in\mathcal B$; otherwise, One is the winner.

\end{defin}

Thus the Rothberger property is the particular case $\mathsf
S_1(\mathcal O,\mathcal O)$ of Definition \ref{def.S1}, where
$\mathcal O$ denotes the family of all open covers of the space ---
more explicitly, $\mathsf S_1(\mathcal O_X,\mathcal O_X)$ means
that $X$ is a Rothberger space, where $\mathcal O_X=\{\mathcal
U\subseteq\tau_X:X=\bigcup\mathcal U\}$. It is also clear that
$X$ is a Menger space if and only if $\mathsf S_1(\mathcal
O^*_X,\mathcal O_X)$ holds, where $\mathcal
O^*_X=\{\mathcal U\in\mathcal O_X:\mathcal U$ is closed
by finite unions$\}$. Similarly, the Rothberger and Menger games can
be regarded as the games $\mathsf G_1(\mathcal O,\mathcal O)$ and
$\mathsf G_1(\mathcal O^*,\mathcal O)$ respectively.\footnote{Although
  the rules of $\mathsf G_1(\mathcal O^*,\mathcal O)$ and the Menger
  game are not quite the same, it is easy to see that these games are
  \emph{equivalent}, i.e. One (resp. Two) has a winning strategy in
  $\mathsf G_1(\mathcal O^*,\mathcal O)$ if and only if One
  (resp. Two) has a winning strategy in the Menger game.
% Thus, when studying the existence of winning strategies in these
% games, both of them can be considered interchangeably.
}

The implication that was already observed in these particular cases
holds in general: namely, the nonexistence of a winning strategy for
 One in the game $\mathsf G_1(\mathcal A,\mathcal B)$ implies
$\mathsf S_1(\mathcal A,\mathcal B)$. The converse, which holds in
the particular cases previously considered (Theorems \ref{pawl} and
\ref{menger}), is not always true; see \cite[Example 3]{sch6}.

We now turn to (apparently) another game:

\begin{defin}[Galvin \cite{galvin}]
\label{def.p-o}

The \emph{point-open game} in a topological space $X$ is defined by
the following rules. In each inning $n\in\omega$,  One picks a
point $x_n\in X$, and then  Two chooses an open set
$U_n\subseteq X$ with $x_n\in U_n$. The play is won by One if
$X=\bigcup_{n\in\omega}U_n$; otherwise, Two is the winner.

\end{defin}

In \cite{galvin}, F. Galvin showed that the point-open game is
essentially the same as the Rothberger game, in the following sense.

We say that two games $\mathsf G$ and $\mathsf G'$ are \emph{dual} if
\begin{itemize}
\item[$\cdot$] One has a winning strategy in $\mathsf G$ if and only
  if Two has a winning strategy in $\mathsf G'$; and
\item[$\cdot$] Two has a winning strategy in $\mathsf G$ if and only
  if One has a winning strategy in $\mathsf G'$.
\end{itemize}

\begin{thm}[Galvin \cite{galvin}]
\label{dual-galvin}

The Rothberger game and the point-open game are dual.

\end{thm}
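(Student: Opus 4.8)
The plan is to establish the two biconditionals defining duality by exhibiting explicit translations between strategies in the two games. The key observation is that the two games differ only in a swap of roles: in the Rothberger game, One plays open covers and Two picks members; in the point-open game, One plays points and Two picks open neighborhoods. The bridge between them is that an open cover can be ``probed'' by a point, and conversely a point together with a choice of neighborhood corresponds to selecting a member of some cover. So the approach is to take a given winning strategy for one player in one game and build from it a winning strategy for the opposite player in the dual game, using the structure of the plays to route information correctly.

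First I would prove that if One has a winning strategy $\sigma$ in the point-open game, then Two has a winning strategy in the Rothberger game, and conversely. For the forward direction, suppose One's strategy $\sigma$ in the point-open game prescribes points $x_n$ in response to the neighborhoods chosen so far. In the Rothberger game, Two is handed open covers $\mathcal U_n$ by One; the idea is for Two to simulate a play of the point-open game in which the point-open-One uses $\sigma$, and Two (as point-open-Two) responds to each point $x_n$ by choosing a member $U_n \in \mathcal U_n$ containing $x_n$ --- such a member exists because $\mathcal U_n$ is a cover. Two then plays this $U_n$ in the Rothberger game. Since $\sigma$ wins the point-open game for One, the simulated play has $X = \bigcup_{n\in\omega} U_n$, which is exactly what Two needs to win the Rothberger game. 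The converse direction and the other biconditional (One winning in Rothberger $\Leftrightarrow$ Two winning in point-open) are handled by analogous simulations, reversing the roles: there one uses that a winning strategy producing a cover for Rothberger-Two can be repackaged so that the neighborhoods it selects fail to cover, letting point-open-One steer the play.

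The main obstacle I expect is bookkeeping rather than conceptual depth: one must be careful that the simulated game state is well-defined, i.e.\ that at inning $n$ the strategy being imported has access only to the legitimately available prior moves, so that the constructed object is genuinely a strategy (depending only on the opponent's moves so far) and not an illegitimate use of future information. In particular, when translating from the point-open game to the Rothberger game one should verify that Two's choice of $U_n \ni x_n$ can be made in a way that depends only on the data visible to Two at that inning. A secondary subtlety is the direction involving One's strategies, where One in one game must convert the opponent's selections into covers (respectively points) that force the simulated opponent into a losing configuration; here one typically feeds One's strategy the singleton-complement type covers or point sequences that encode ``the play so far does not yet cover $X$.'' Once the four simulations are set up with correct dependencies, winning in each case follows immediately from the win condition of the imported strategy, since both games share the same terminal criterion $X = \bigcup_{n\in\omega} U_n$.
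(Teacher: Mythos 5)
Your two ``easy'' simulations are correct: importing a winning strategy of One in the point-open game to get one for Two in the Rothberger game, and importing a winning strategy of One in the Rothberger game to get one for Two in the point-open game, both work exactly as you describe, because in each case the imported strategy belongs to the player who already has all the needed information at each inning. (Note that the paper itself gives no proof of this theorem --- it is quoted from Galvin --- so the comparison here is with Galvin's argument.)

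The gap is in the remaining two implications, which you dismiss as ``analogous simulations''; they are not, and one of them is the substantial content of Galvin's theorem. Consider ``Two has a winning strategy $\tau$ in the Rothberger game $\Rightarrow$ One has a winning strategy in the point-open game.'' Here the information flows the wrong way: $\tau$ emits an open set only after being handed an open cover, but One must commit to a point $x_0$ before any cover exists in the simulation, and must do so in such a way that for \emph{every} neighbourhood $U_0\ni x_0$ that Two might then play, One can produce a cover $\mathcal{U}_0$ with $\tau(\mathcal{U}_0)\subseteq U_0$, so that the simulated Rothberger play stays inside the actual point-open play. The existence of such a point is precisely Galvin's key lemma, proved by contradiction: if no such $x_0$ existed, every $x\in X$ would have a neighbourhood $U_x$ such that $\tau(\mathcal{U})\not\subseteq U_x$ for all covers $\mathcal{U}$; but then $\mathcal{U}=\{U_x:x\in X\}$ is itself an open cover, and $\tau(\mathcal{U})=U_{x^*}\subseteq U_{x^*}$ for some $x^*$, a contradiction. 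This lemma, applied recursively to the residual maps $\mathcal{V}\mapsto\tau(\mathcal{U}_0,\dots,\mathcal{U}_{n-1},\mathcal{V})$, is what makes One's point-open strategy well defined, and then $X=\bigcup_{n\in\omega}\tau(\mathcal{U}_0,\dots,\mathcal{U}_n)\subseteq\bigcup_{n\in\omega}U_n$ gives the win. Nothing in your sketch plays this role; indeed your description of this direction (``the neighborhoods it selects fail to cover'') has the winning condition backwards, since in the point-open game One \emph{wants} Two's neighbourhoods to cover $X$. The fourth implication (Two wins point-open $\Rightarrow$ One wins Rothberger) likewise needs a specific device your sketch does not supply: at each stage One must play the cover $\{\tau(x_0,\dots,x_{n-1},x):x\in X\}$ consisting of \emph{all possible responses} of Two's point-open strategy (a cover because each $\tau(\dots,x)$ contains $x$), so that any selection Two-Rothberger makes can be decoded as the response to some point $x_n$; ``singleton-complement type covers'' do not accomplish this.
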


We may then wonder how the point-open game could be modified to
produce a similar game that is dual to the Menger game.
The following is a natural candidate:

\begin{defin}[Telg\'arsky \cite{telg1}]
\label{def.c-o}

The \emph{compact-open game} in a topological space $X$ is defined as
follows: in each inning $n\in\omega$,  One chooses a compact
subset $K_n$ of $X$, and then  Two chooses an open subset $U_n$
of $X$ such that $K_n\subseteq U_n$; the play is won by One if
$X=\bigcup_{n\in\omega}U_n$, otherwise Two is the winner.

\end{defin}

In \cite[Corollary 3]{topsoe}, R. Telg\'arsky proved that One has a
winning strategy in the compact-open game if and only if Two has a
winning strategy in the Menger game. Telg\'arsky also observes
(in Proposition 1 of the same paper) that One having a winning
strategy in the Menger game implies Two having a winning strategy in
the compact-open game, and then asks:

\begin{prob}[Telg\'arsky \cite{topsoe}]
\label{dual?}

Does the converse hold? I.e., are the Menger game and the compact-open
game dual?

\end{prob}

As we shall see later, in Examples \ref{cov<d} and \ref{sierp+sorg},
this may not always be the case.

The relationship between these two games may be more clearly
understood by considering the following.

\begin{defin}%[Telg\'arsky \cite{telg1}]
\label{def.Ccov}

Let $X$ be a topological space. An open cover
$\mathcal{U}$ of $X$ is said to be a \emph{$k$-cover}
of $X$ if for every compact subset $K$ of $X$ there is
$U\in\mathcal{U}$ such that $K\subseteq U$. The family of all
$k$-covers of $X$ will be denoted by $\mathcal{K}_X$.

\end{defin}

%An interesting problem concerning $\k$-covers is
%to determine under which conditions we have that every $\boldsymbol
%C$-cover has a countable $\k$-subcover. It turns out that
%every set of reals has this property; more generally, we have:
%
%
%\begin{lemma}
%\label{c-subc}
%If $X$ is a second-countable topological space, then every
%$\k$-cover of $X$ contains a countable $\boldsymbol
%C$-cover of $X$.
%\end{lemma}
%
%
%\begin{proof}
%
%Let $\mathcal{U}$ be a $\k$-cover of $X$, and choose for each
%$K\in\k$ a $U_K\in\mathcal{U}$ such that
%$K\subseteq U_K$. Now fix a countable base $\mathcal{B}$
%for $X$, and let
%$\varphi:\k\rightarrow[\mathcal{B}]^{<\omega}$ be such that
%$K\subseteq\bigcup\varphi(K)\subseteq U_K$ holds for every
%$K\in\k$. Since $[\mathcal{B}]^{<\omega}$ is countable, there
%is a countable $\k_0\subseteq\k$ such that
%$\varphi[\k_0]=\varphi[\k]$. It follows that
%$\{U_K:K\in\k_0\}$ is a $\k$-cover of $X$,
%because for every $K\in\k$ we have that
%$K\subseteq\bigcup\varphi(K)=\bigcup\varphi(K_0)\subseteq U_{K_0}$ for
%some $K_0\in\k_0$.
%\end{proof}
%
%
%Note that any compact space of uncountable weight witnesses that the
%converse of lemma \ref{c-subc} does not hold.

The next result is a particular case of Theorem 6.2 of
\cite{telg2}.

\begin{prop}[Galvin, Telg\'arsky \cite{telg2}]
\label{dualc-o}

The game $\mathsf{G}_1(\mathcal{K},\mathcal{O})$ and the compact-open
game are dual.

\end{prop}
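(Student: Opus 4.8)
The plan is to verify the two equivalences required by duality, namely (A) One has a winning strategy in $\mathsf G_1(\mathcal K,\mathcal O)$ if and only if Two has one in the compact-open game, and (B) Two has a winning strategy in $\mathsf G_1(\mathcal K,\mathcal O)$ if and only if One has one in the compact-open game. Three of the four resulting implications I would handle by a single, uniform strategy-translation (strategy-stealing) scheme, whose only ingredient is the defining feature of $k$-covers: whenever $\mathcal U\in\mathcal K_X$ and $K\subseteq X$ is compact, there is some $U\in\mathcal U$ with $K\subseteq U$. The idea is to run the two games in parallel, letting the player I am constructing consult the stolen strategy while using this feature to bridge between the compact sets of the compact-open game and the open members of the $k$-covers.

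Concretely, for (A)$\Rightarrow$ I would steal a winning strategy $\sigma$ for One in $\mathsf G_1(\mathcal K,\mathcal O)$ and build one for Two in the compact-open game: at inning $n$, after One has played a compact set $K_n$, Two computes the $k$-cover $\sigma$ would play against the selections made so far and answers with some member $U_n$ of it containing $K_n$; feeding these $U_n$ back to $\sigma$ as Two's selections, the fact that $\sigma$ wins (the selections fail to cover $X$) means precisely $\bigcup_n U_n\neq X$, so Two wins. The implication (B)$\Leftarrow$ is the mirror image: from a winning strategy for One in the compact-open game one builds a winning strategy for Two in $\mathsf G_1(\mathcal K,\mathcal O)$ by answering a $k$-cover with a member containing the compact set the stolen strategy prescribes. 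For (A)$\Leftarrow$ I would steal a winning strategy $\tau$ for Two in the compact-open game; here the key observation is that, for a fixed history $K_0,\dots,K_{n-1}$, the set $\{\tau(K_0,\dots,K_{n-1},K):K\subseteq X\text{ compact}\}$ is itself a $k$-cover, since each such open set contains the corresponding $K$. One therefore plays this $k$-cover; a selection from it has the form $\tau(K_0,\dots,K_{n-1},K_n)$, from which One recovers a suitable $K_n$, and because $\tau$ wins the simulated compact-open play the selections do not cover $X$, so One wins.

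The remaining implication (B)$\Rightarrow$ — that a winning strategy for Two in $\mathsf G_1(\mathcal K,\mathcal O)$ yields one for One in the compact-open game — is where I expect essentially all the difficulty to lie, and the naive simulation breaks down here: the moves of Two in $\mathsf G_1(\mathcal K,\mathcal O)$ are arbitrary open sets which, unlike the open supersets of compacta produced by Two in the compact-open game, cannot be turned into compact sets, so there is no way to read off One's compact moves. To circumvent this I would not aim at the compact-open game directly but instead at the Menger game, exploiting that for any open cover $\mathcal V$ its closure under finite unions $\mathcal V^*=\{\bigcup\mathcal F:\mathcal F\in[\mathcal V]^{<\aleph_0}\}$ is a $k$-cover (a compact set is covered by finitely many members of $\mathcal V$, whose union lies in $\mathcal V^*$). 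Given a winning strategy $\tau$ for Two in $\mathsf G_1(\mathcal K,\mathcal O)$, I would build one for Two in the Menger game: against an open cover $\mathcal V_n$ played by One, feed $\tau$ the $k$-cover $\mathcal V_n^*$, write the resulting selection $\tau(\mathcal V_0^*,\dots,\mathcal V_n^*)$ as $\bigcup\mathcal F_n$ with $\mathcal F_n\in[\mathcal V_n]^{<\aleph_0}$, and answer with $\mathcal F_n$; since $\tau$ wins, $\bigcup_n\bigcup\mathcal F_n=X$, so Two wins the Menger game. Telg\'arsky's theorem that One has a winning strategy in the compact-open game if and only if Two has one in the Menger game (\cite[Corollary 3]{topsoe}) then yields a winning strategy for One in the compact-open game, completing the argument. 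The first three implications are mere bookkeeping once the $k$-cover property is in hand; this last one is the real content, and reducing it to the already-known equivalence with the Menger game via the finite-union closure $\mathcal V^*$ is the crucial step.
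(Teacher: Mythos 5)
Three of your four implications are fine: the two straight simulations and the observation that, for a fixed history, the family of Two's possible responses $\{\tau(K_0,\dots,K_{n-1},K):K\subseteq X\ \text{compact}\}$ is a $k$-cover are exactly the standard arguments. The genuine gap is in your treatment of (B)$\Rightarrow$. Your reduction ``Two wins $\mathsf G_1(\mathcal K,\mathcal O)$ $\Rightarrow$ Two wins the Menger game'' via the finite-union closure $\mathcal V^*$ is correct, but the bridge back --- ``Two wins the Menger game $\Rightarrow$ One wins the compact-open game'' --- is precisely the implication $(c)\rightarrow(b)$ of Theorem \ref{telg-topsoe}, which the paper states only for \emph{regular} spaces, and which is in fact false in general. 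Indeed, in the Hausdorff non-regular space $X$ of Example \ref{R-ctbl}, Two has a winning strategy in the Menger game while $\mathsf S_1(\mathcal K_X,\mathcal O_X)$ fails; the failure of $\mathsf S_1(\mathcal K_X,\mathcal O_X)$ gives One a winning strategy in $\mathsf G_1(\mathcal K_X,\mathcal O_X)$, which by your own implication (A)$\Rightarrow$ gives Two a winning strategy in the compact-open game on $X$, so One cannot have one there. Thus Telg\'arsky's Corollary 3 of \cite{topsoe}, as you invoke it, must carry a regularity hypothesis (this is what Theorem \ref{telg-topsoe} makes explicit), and your argument only establishes Proposition \ref{dualc-o} for regular spaces, whereas the proposition is stated and used without any separation axiom. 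There is also a secondary worry: Telg\'arsky's proof of that corollary goes through the machinery of \cite{telg1,telg2}, so quoting it to prove a special case of Theorem 6.2 of \cite{telg2} risks circularity.

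The missing idea is that the reflection trick you already used for (A)$\Leftarrow$ can be applied to Two's strategy as well, yielding a direct proof of (B)$\Rightarrow$ for arbitrary spaces (this is how the Galvin--Telg\'arsky argument behind the paper's citation runs; the paper itself gives no proof, only the reference). Let $\tau$ be a winning strategy for Two in $\mathsf G_1(\mathcal K,\mathcal O)$ and fix a position $(\mathcal U_0,\dots,\mathcal U_{n-1})$. If for every compact $K$ there were an open $U_K\supseteq K$ such that no $k$-cover $\mathcal U$ satisfies $\tau(\mathcal U_0,\dots,\mathcal U_{n-1},\mathcal U)\subseteq U_K$, then $\mathcal U^*=\{U_K:K\subseteq X\ \text{compact}\}$ would itself be a $k$-cover, and $\tau(\mathcal U_0,\dots,\mathcal U_{n-1},\mathcal U^*)$ would equal some $U_K$, hence be contained in it --- a contradiction. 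So there is a compact $K_n$ such that every open $V\supseteq K_n$ includes $\tau(\mathcal U_0,\dots,\mathcal U_{n-1},\mathcal U)$ for some $k$-cover $\mathcal U$. One plays this $K_n$; when Two answers with an open $V_n\supseteq K_n$, One fixes a $k$-cover $\mathcal U_n$ with $\tau(\mathcal U_0,\dots,\mathcal U_n)\subseteq V_n$ and repeats the claim at the extended position. Since $\tau$ is winning, $X=\bigcup_{n\in\omega}\tau(\mathcal U_0,\dots,\mathcal U_n)\subseteq\bigcup_{n\in\omega}V_n$, so One wins the compact-open game, with no separation axioms needed.
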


Problem \ref{dual?} may then be rewritten as:

\begin{prob}[Telg\'arsky \cite{topsoe}]
\label{dual2?}

Does the existence of a winning strategy for One in $\mathsf
G_1(\mathcal K,\mathcal O)$ imply the existence of a winning strategy
for One in the game $\mathsf G_1(\mathcal O^*,\mathcal O)$?

\end{prob}

Note that $\mathcal O^*\subseteq\mathcal K$; thus, a counterexample to
Problem \ref{dual?} (i.e. Problem \ref{dual2?}) must be a space in
which these two classes of open covers are, in a certain sense, very
far from each other.
These problems shall be further discussed in Section \ref{section.ex}.

As has already been observed, if One does not have a winning strategy
in $\mathsf{G}_1(\mathcal{K},\mathcal{O})$ (i.e. Two does not have
a winning strategy in the compact-open game), then
$\mathsf{S}_1(\mathcal{K},\mathcal{O})$ holds.
The question of whether the converse holds remains unsettled:

\begin{prob}
\label{c-o-equiv?}

Is $\mathsf{S}_1(\mathcal{K},\mathcal{O})$ equivalent to One not
having a winning strategy in the game
$\mathsf{G}_1(\mathcal{K},\mathcal{O})$ (i.e. Two not
having a winning strategy in the compact-open game)?

\end{prob}

%The answer is positive for second-countable spaces.
%
%
%\begin{prop}
%\label{c-o-equiv}
%
%For a second-countable topological space, the following properties are
%equivalent:
%
%\begin{itemize}
%
%\item[$(a)$] $\mathsf{S}_1(\mathcal{K},\mathcal{O})$;
%
%\item[$(b)$] $I$ has no winning strategy in
%  $\mathsf{G}_1(\mathcal{K},\mathcal{O})$;
%
%\item[$(c)$] $II$ has no winning strategy in the compact-open
%  game.
%
%\end{itemize}
%
%\end{prop}
%
%
%\begin{proof}
%In view of proposition \ref{dualc-o}, we only have to show that
%$(a)$ implies $(b)$.
%
%Let $X$ be a second-countable space which satisfies
%$\mathsf{S}_1(\mathcal{K},\mathcal{O})$, and let $\theta$ be a strategy for $I$
%in $\mathsf{G}_1(\mathcal{K},\mathcal{O})$. By lemma \ref{c-subc}, we can
%assume that $I$ always plays countable $\k$-covers while playing
%according to $\theta$, so $\theta$ can be viewed as a function which
%assigns to each $s\in\mbox{}^{<\omega}\omega$ an open subset $U_s$ of
%$X$ in such a way that, for each $s\in\mbox{}^{<\omega}\omega$, the
%set $\{U_{s^\smallfrown(n)}:n\in\omega\}$ is a $\k$-cover of $X$.
%
%The result then follows by a straightforward adaptation of the proof
%of lemma 2 in \cite{pawl}.
%\end{proof}

\section{Alster spaces}

We now turn to properties of covers of topological spaces by
$G_\delta$ subsets. The main object of our interest is the
\emph{Alster property}, defined below (Definition \ref{def.alster}).

\begin{defin}
\label{alstercover}

Let $X$ be a topological space. A cover $\mathcal{W}$ of $X$ by
$G_\delta$ subsets is said to be an \emph{Alster cover} if every
compact subset of $X$ is included in some element of $\mathcal W$.
The set of all Alster covers of $X$ will be denoted by
$\mathscr{A}_X$.

\end{defin}

\begin{defin}[Alster \cite{alster}]
\label{def.alster}

A topological space $X$ is an \emph{Alster space}
%\footnote{This terminology was introduced in \cite{barr2}.}
if, for every
$\mathcal U\in\mathscr A_X$, there is a countable $\mathcal
V\subseteq\mathcal U$ with $X=\bigcup\mathcal V$.

\end{defin}

Alster spaces were introduced in \cite{alster} in an attempt to
characterize the class of \emph{productively Lindel\"of spaces},
i.e. the class of topological spaces $X$ such that $X\times Y$ is
Lindel\"of whenever $Y$ is a Lindel\"of space.

\begin{thm}[Alster \cite{alster}]
\label{thm.alster}

Alster spaces are productively Lindel\"of. Assuming the Continuum
Hypothesis, productively Lindel\"of spaces of weight not exceeding
$\aleph_1$ are Alster.

\end{thm}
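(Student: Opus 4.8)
The statement splits into two independent implications, and I would treat them separately.

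\emph{Alster $\Rightarrow$ productively Lindel\"of.} Let $X$ be an Alster space and let $Y$ be Lindel\"of; I want to show $X\times Y$ is Lindel\"of. Fix an open cover $\mathcal G$ of $X\times Y$, which I may assume consists of boxes $U\times V$ with $U$ open in $X$ and $V$ open in $Y$. The plan is to manufacture an Alster cover of $X$ out of $\mathcal G$ and then apply Definition \ref{def.alster}. Given a compact $K\subseteq X$, the product $K\times Y$ is Lindel\"of (being the product of a compact space with a Lindel\"of one), so some countable $\mathcal C_K\subseteq\mathcal G$ covers $K\times Y$. Now I use the tube lemma: for each $y\in Y$ finitely many boxes of $\mathcal C_K$ cover the compact set $K\times\{y\}$, and taking the union of the $X$-sides of these boxes and the intersection of their $V$-sides produces an open $O^y\supseteq K$ and an open $V^y\ni y$ with $O^y\times V^y\subseteq\bigcup\mathcal C_K$. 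By Lindel\"ofness of $Y$ choose $(y_j)_{j\in\omega}$ with $Y=\bigcup_j V^{y_j}$ and set $G_K=\bigcap_j O^{y_j}$. Then $G_K$ is a $G_\delta$ subset of $X$ containing $K$, and $G_K\times Y\subseteq\bigcup\mathcal C_K$.

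The family $\{G_K:K\subseteq X\text{ compact}\}$ is then an Alster cover of $X$ in the sense of Definition \ref{alstercover}: it covers $X$ (singletons are compact), each member is $G_\delta$, and every compact $K$ lies in $G_K$. Since $X$ is Alster, countably many of them, say $G_{K_0},G_{K_1},\dots$, already cover $X$. Then $\bigcup_m\mathcal C_{K_m}$ is a countable subfamily of $\mathcal G$, and it covers $X\times Y$ because $X=\bigcup_m G_{K_m}$ and $G_{K_m}\times Y\subseteq\bigcup\mathcal C_{K_m}$ for each $m$. Hence $X\times Y$ is Lindel\"of.

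\emph{Under CH, weight $\le\aleph_1$ and productively Lindel\"of $\Rightarrow$ Alster.} I would argue by contraposition: assuming $X$ is not Alster, I produce a Lindel\"of space $Y$ with $X\times Y$ not Lindel\"of. First I use the cardinal hypotheses to tame the counterexample. A base of size $\le\aleph_1$ generates, via countable intersections, at most $\aleph_1^{\aleph_0}=2^{\aleph_0}=\aleph_1$ many $G_\delta$ sets under CH; hence any witnessing Alster cover with no countable subcover may be thinned to one of the form $\mathcal W=\{W_\alpha:\alpha<\omega_1\}$, still with no countable subcover, where each $W_\alpha=\bigcap_n U_\alpha^n$ with the $U_\alpha^n$ open and decreasing in $n$. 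Passing to a subsequence, I may also assume the enumeration is irredundant, so that for every $\alpha$ the set $W_\alpha$ is not covered by $\{W_\beta:\beta<\alpha\}$, witnessed by points $x_\alpha$.

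The core of the argument is then to build, from this $\omega_1$-indexed data, a Lindel\"of space $Y$ of size $\aleph_1$ together with an open cover of $X\times Y$ that admits no countable subcover; the open sets of the cover are to be assembled from the $U_\alpha^n$ so that any countable subfamily misses points of the form $(x_\alpha,\cdot)$ for cofinally many $\alpha$, precisely because $\mathcal W$ has no countable subcover. Here both hypotheses are essential: CH (equivalently $\mathfrak d=\aleph_1$, furnishing a dominating family) is what lets the ``vertical'' $G_\delta$ data be organized into a genuinely Lindel\"of space, while the weight bound keeps the entire construction inside $\omega_1$. The existence of such a $Y$ contradicts productive Lindel\"ofness of $X$, which gives the result. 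I expect this construction --- simultaneously guaranteeing that $Y$ is Lindel\"of and that $X\times Y$ is not --- to be the main obstacle, whereas the first implication is a routine tube-lemma argument once the Alster cover $\{G_K\}$ has been identified.
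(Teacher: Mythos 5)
The paper does not actually prove Theorem \ref{thm.alster}; it quotes both halves from Alster's paper \cite{alster}, so your proposal can only be judged on its own merits. The first half stands: refining to a cover by boxes, covering each $K\times Y$ by a countable $\mathcal C_K$, compressing with the tube lemma to obtain a $G_\delta$ set $G_K$ with $K\subseteq G_K$ and $G_K\times Y\subseteq\bigcup\mathcal C_K$, observing that $\{G_K:K\subseteq X\text{ compact}\}$ is an Alster cover in the sense of Definition \ref{alstercover}, and then invoking Definition \ref{def.alster} to extract a countable subcover of the original cover --- this is a complete and correct proof that Alster spaces are productively Lindel\"of, and it is essentially the classical argument.

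The second half, however, has a genuine gap: the construction of the Lindel\"of space $Y$ with $X\times Y$ not Lindel\"of --- which is the entire mathematical content of the CH direction --- is absent. What you write is a description of the properties such a $Y$ and its cover ought to have, followed by the admission that producing them is ``the main obstacle''; restating the goal is not an argument, and nothing in your sketch indicates how Lindel\"ofness of $Y$ would actually be verified. The preliminary reduction also needs repair on three counts. First, the counting is wrong as stated: countable intersections of \emph{basic} sets number at most $\aleph_1^{\aleph_0}=\aleph_1$ under CH, but a general $G_\delta$ set is a countable intersection of open sets, each an arbitrary union of basic sets, and there may be $2^{\aleph_1}>\aleph_1$ of these; to thin $\mathcal W$ to size $\aleph_1$ you must instead squeeze, for each compact $K$ and each $W\in\mathcal W$ with $K\subseteq W$, a set of the form $\bigcap_{n}B_n$ (each $B_n$ a finite union of basic sets, chosen by compactness of $K$) between $K$ and $W$, and check the resulting family is still an Alster cover with no countable subcover. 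Second, the parenthetical ``CH (equivalently $\mathfrak d=\aleph_1$)'' is false: CH implies $\mathfrak d=\aleph_1$ but not conversely. Third, the ``irredundant'' re-enumeration can destroy the defining property of an Alster cover --- a compact set whose only containing member gets discarded is no longer absorbed by a single surviving member --- so whatever construction is intended cannot silently pass to that subfamily while still using the Alster-cover structure.
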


An internal characterization of productive Lindel\"ofness --- a
problem attributed to H. Tamano in \cite[Problem 5]{przy} ---
is still unknown. The following problem, implicitly raised in
\cite{alster} (see also \cite{barr2}), remains open:

\begin{prob}[Alster \cite{alster}, Barr-Kennison-Raphael \cite{barr2}]
\label{alster?}

Is is true that every productively Lindel\"of space is an Alster
space?

\end{prob}

After observing that both the properties ``$X$ is an Alster space''
and ``Two has a winning strategy in the Menger game on $X$'' are
implied by ``$X$ is $\sigma$-compact''\footnote{A space is
  \emph{$\sigma$-compact} if it is a countable union of compact
  subsets.} and imply ``$X$ is Lindel\"of and every continuous image
of $X$ in a separable metrizable space is $\sigma$-compact'',
F. Tall asks in \cite[Problem 5]{tallD}:

\begin{prob}[Tall \cite{tallD}]
\label{tall?}

Is there any implication between the Alster property and Two having a
winning strategy in the Menger game?

\end{prob}

We shall provide a complete answer to Problem \ref{tall?} by showing
that:
\begin{itemize}

\item[$\cdot$]
if a Two has a winning strategy in the Menger game on a regular space
$X$, then $X$ is Alster (Corollary \ref{menger->alster});

\item[$\cdot$]
the regularity hypothesis in the above result is essential (Example
\ref{R-ctbl});

\item[$\cdot$]
the converse implication does not hold (Example \ref{Ytelg}).

\end{itemize}

In what follows, we will denote by $\mathcal O^\delta_X$ the family
of all covers of a topological space $X$ by $G_\delta$ subsets.
We start by proving a characterization of the Alster property in terms
of the selection principle $\mathsf S_1$.\footnote{Proposition
  \ref{alster-sel} has also been obtained (independently) by
  L. Babinkostova, B. Pansera and M. Scheepers in \cite{bab}.}

\begin{prop}
\label{alster-sel}

A topological space $X$ is an Alster space if and only if
$\mathsf{S}_1(\mathscr A_X,\mathcal O^\delta_X)$ holds.

\end{prop}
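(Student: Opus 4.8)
The plan is to prove the two implications of the equivalence separately, noting that essentially all of the content lies in deriving the selection principle from the Alster property. The direction from $\mathsf S_1(\mathscr A_X,\mathcal O^\delta_X)$ to the Alster property should be immediate: given any Alster cover $\mathcal U\in\mathscr A_X$, I would feed the \emph{constant} sequence $\mathcal U_n=\mathcal U$ ($n\in\omega$) into $\mathsf S_1$, obtaining $W_n\in\mathcal U$ with $\{W_n:n\in\omega\}\in\mathcal O^\delta_X$; this $\{W_n:n\in\omega\}$ is then a countable subfamily of $\mathcal U$ covering $X$, exactly as required by Definition \ref{def.alster}.

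For the reverse direction, assume $X$ is Alster and let $(\mathcal U_n)_{n\in\omega}$ be a sequence of Alster covers; I must select $W_n\in\mathcal U_n$ so that $\{W_n:n\in\omega\}$ covers $X$. The key idea is to compress the whole sequence into a single Alster cover by exploiting that the family of $G_\delta$ subsets is closed under countable intersections. Concretely, I would set
\[
\mathcal W=\Bigl\{\,\textstyle\bigcap_{n\in\omega}U_n : U_n\in\mathcal U_n\text{ for every }n\in\omega\,\Bigr\}.
\]
Each member of $\mathcal W$ is a countable intersection of $G_\delta$ subsets, hence $G_\delta$. Moreover, given a compact $K\subseteq X$, for each $n$ I can pick $U_n\in\mathcal U_n$ with $K\subseteq U_n$ (since $\mathcal U_n\in\mathscr A_X$), and then $K\subseteq\bigcap_n U_n\in\mathcal W$; in particular $\mathcal W$ covers $X$. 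Thus $\mathcal W\in\mathscr A_X$, and the Alster property yields a countable $\{V^k:k\in\omega\}\subseteq\mathcal W$ with $X=\bigcup_{k\in\omega}V^k$, where I fix for each $k$ a representation $V^k=\bigcap_{n\in\omega}U^{(k)}_n$ with $U^{(k)}_n\in\mathcal U_n$.

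It remains to extract a single choice from each $\mathcal U_n$, and here the main (though minor) obstacle is purely combinatorial bookkeeping: I have a doubly-indexed family $(U^{(k)}_n)_{(n,k)}$ but am allowed to retain only one set per $n$. The trick is to reindex through a surjection $\sigma\colon\omega\to\omega$ and set $W_n=U^{(\sigma(n))}_n\in\mathcal U_n$. Then for any $x\in X$ there is $k$ with $x\in V^k$, and, $\sigma$ being surjective, some $n$ with $\sigma(n)=k$; since $V^k=\bigcap_m U^{(k)}_m\subseteq U^{(k)}_n=W_n$, we get $x\in W_n$. Hence $\{W_n:n\in\omega\}$ is a $G_\delta$ cover of $X$ with $W_n\in\mathcal U_n$, establishing $\mathsf S_1(\mathscr A_X,\mathcal O^\delta_X)$. (Should the countable subcover turn out to be finite, one simply takes $\sigma$ surjective onto its finite index set.) I anticipate no genuine difficulty beyond carefully verifying that $\mathcal W$ is an Alster cover and that the surjective reindexing recovers every $V^k$.
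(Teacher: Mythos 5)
Your proposal is correct and follows essentially the same route as the paper: both build the auxiliary Alster cover consisting of all countable intersections $\bigcap_{n}U_n$ with $U_n\in\mathcal U_n$, extract a countable subcover by the Alster property, and then pass from the intersections back to single selections $W_n\in\mathcal U_n$ using that each intersection is contained in any of its factors. Your surjective reindexing $W_n=U^{(\sigma(n))}_n$ is just a mild variant of the paper's diagonal choice $A_n=f_n(n)$ (which is your argument with $\sigma=\mathrm{id}$), so the two proofs coincide in substance.
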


\begin{proof}

The converse is clear.
For the direct implication,
suppose that $X$ is an Alster space and let
$(\mathcal{U}_n)_{n\in\omega}$ be a sequence in $\mathscr A_X$.
Let $S=\prod_{n\in\omega}\mathcal{U}_n$ and, for each $f\in
S$, define $V_f=\bigcap_{n\in\omega}f(n)$. It follows that
$\{V_f:f\in S\}$ is an Alster cover of $X$; therefore,
there is $\{f_n:n\in\omega\}\subseteq S$ such
that $X=\bigcup_{n\in\omega}V_{f_n}$. Now,
for every $n\in\omega$, define $A_n=f_n(n)$.
Then
$$
X=\bigcup_{n\in\omega}V_{f_n}=\bigcup_{n\in\omega}\bigcap_{k\in\omega}f_n(k)\subseteq\bigcup_{n\in\omega}f_n(n)=\bigcup_{n\in\omega}A_n
$$
and, since $A_n
%=f_n(n)
\in\mathcal{U}_n$ for all $n\in\omega$, we are done.
\end{proof}

\begin{corol}
\label{alster->S1(K,O)}

Every Alster space satisfies $\mathsf S_1(\mathcal K,\mathcal O)$.

\end{corol}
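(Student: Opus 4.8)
The plan is to deduce Corollary \ref{alster->S1(K,O)} directly from Proposition \ref{alster-sel}, exploiting the fact that every $k$-cover by open sets is in particular an Alster cover, and that every open cover of a space is a cover by $G_\delta$ subsets. Since the Corollary asserts that every Alster space satisfies $\mathsf{S}_1(\mathcal{K},\mathcal{O})$, the natural strategy is to verify the two inclusions $\mathcal{K}_X\subseteq\mathscr{A}_X$ and $\mathcal{O}_X\subseteq\mathcal{O}^\delta_X$, and then invoke the monotonicity of the selection principle $\mathsf{S}_1$ in its arguments.

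First I would establish the inclusion $\mathcal{K}_X\subseteq\mathscr{A}_X$. Given a $k$-cover $\mathcal{U}$ of $X$, each $U\in\mathcal{U}$ is open, hence a $G_\delta$ subset of $X$ (every open set is trivially a countable intersection of open sets, namely of itself); moreover, by the defining property of a $k$-cover (Definition \ref{def.Ccov}), every compact subset of $X$ is contained in some member of $\mathcal{U}$. Comparing with Definition \ref{alstercover}, this is precisely the statement that $\mathcal{U}$ is an Alster cover of $X$. Thus $\mathcal{K}_X\subseteq\mathscr{A}_X$. Next I would note the trivial inclusion $\mathcal{O}_X\subseteq\mathcal{O}^\delta_X$: any open cover is in particular a cover by $G_\delta$ sets, again because open sets are $G_\delta$.

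The main step is then to combine these inclusions with Proposition \ref{alster-sel}. Suppose $X$ is an Alster space, so that $\mathsf{S}_1(\mathscr{A}_X,\mathcal{O}^\delta_X)$ holds. Given any sequence $(\mathcal{U}_n)_{n\in\omega}$ of $k$-covers of $X$, each $\mathcal{U}_n$ lies in $\mathscr{A}_X$ by the first inclusion; applying $\mathsf{S}_1(\mathscr{A}_X,\mathcal{O}^\delta_X)$ yields a selection $(U_n)_{n\in\omega}$ with $U_n\in\mathcal{U}_n$ for each $n$ and $\{U_n:n\in\omega\}\in\mathcal{O}^\delta_X$, i.e. $X=\bigcup_{n\in\omega}U_n$. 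But the $U_n$ are open sets, so $\{U_n:n\in\omega\}$ is in fact an element of $\mathcal{O}_X$. This is exactly the conclusion of $\mathsf{S}_1(\mathcal{K}_X,\mathcal{O}_X)$.

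I do not anticipate any serious obstacle here, since the argument is essentially a monotonicity observation about $\mathsf{S}_1$ together with the elementary remark that open sets are $G_\delta$. The only point requiring mild care is to confirm that the selected sets, which Proposition \ref{alster-sel} only guarantees to form a cover by $G_\delta$ subsets, automatically form an \emph{open} cover in our setting — but this is immediate because each selected $U_n$ is an element of the $k$-cover $\mathcal{U}_n$ and hence open by definition. The whole proof is therefore a short two-line deduction from the preceding proposition.
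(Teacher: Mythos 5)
Your proof is correct and is precisely the paper's argument: the authors also deduce the corollary immediately from Proposition \ref{alster-sel} via the inclusion $\mathcal K\subseteq\mathscr A$, with the remaining observations (open sets are $G_\delta$, and the selected sets form an open cover since they come from $k$-covers) left implicit. You have merely written out in full the details the paper compresses into one line.
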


\begin{proof}

This is immediate in view of Proposition \ref{alster-sel}, since
$\mathcal K\subseteq\mathscr A$.
\end{proof}

Let us now consider a natural modification of the compact-open game.

\begin{defin}[Telg\'arsky \cite{telg2}]
\label{def.c-Gd}

The \emph{compact-$G_\delta$ game} in a topological space $X$ is
defined in the same way as the compact-open game, with the difference
that now Two is allowed to play $G_\delta$ subsets of $X$.

\end{defin}

The proof of the following result is analogous to the proof of
Proposition \ref{dualc-o} --- see \cite[Theorem 6.2]{telg2}.
% and \cite{galvin}).

\begin{prop}
\label{c-gd}
The game $\mathsf{G}_1(\mathscr A,\mathcal{O}^\delta)$ and the
compact-$G_\delta$ game are dual.
\end{prop}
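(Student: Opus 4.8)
The plan is to establish the duality of $\mathsf{G}_1(\mathscr A,\mathcal O^\delta)$ and the compact-$G_\delta$ game directly, imitating the proof of Proposition \ref{dualc-o} (i.e. \cite[Theorem 6.2]{telg2}), with compact sets and $G_\delta$ covers playing the roles that compact sets and open covers play in the compact-open setting. Recall that in $\mathsf G_1(\mathscr A,\mathcal O^\delta)$, One plays Alster covers $\mathcal U_n\in\mathscr A_X$ and Two selects a single member $W_n\in\mathcal U_n$, with Two winning if $\{W_n:n\in\omega\}\in\mathcal O^\delta_X$, i.e. if the chosen $G_\delta$ sets cover $X$; in the compact-$G_\delta$ game One plays compact sets $K_n$ and Two responds with $G_\delta$ sets $W_n\supseteq K_n$, with One winning if the $W_n$ cover $X$. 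Thus the two games differ only by an interchange of the two players together with the passage from ``choosing one element of an Alster cover'' to ``covering one compact set by a $G_\delta$.'' I will prove the two required equivalences (One wins one game iff Two wins the other, and vice versa) by explicitly translating strategies back and forth.

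First I would prove that if Two has a winning strategy $\sigma$ in the compact-$G_\delta$ game, then One has a winning strategy in $\mathsf G_1(\mathscr A,\mathcal O^\delta)$. The natural move is for One, at each inning, to play the Alster cover consisting of all $G_\delta$ sets that $\sigma$ would produce as responses to compact sets, given the history so far; formally, One's $n$th move is the Alster cover $\mathcal U_n=\{\sigma(K_0,\dots,K_n):K_i\ \text{compact}\}$ (suitably indexed by the earlier compact inputs consistent with Two's earlier choices). Whichever $W_n\in\mathcal U_n$ Two selects corresponds to $\sigma$'s answer to some compact $K_n$, so the resulting sequence of $G_\delta$ sets is exactly a play against $\sigma$ in the compact-$G_\delta$ game; since $\sigma$ is winning for Two there, the $W_n$ fail to cover $X$, meaning $\{W_n:n\in\omega\}\notin\mathcal O^\delta_X$, so One wins in $\mathsf G_1(\mathscr A,\mathcal O^\delta)$. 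The converse directions --- from a winning strategy for One in $\mathsf G_1(\mathscr A,\mathcal O^\delta)$ back to a winning strategy for Two in the compact-$G_\delta$ game, and the symmetric pair relating One in the compact-$G_\delta$ game with Two in $\mathsf G_1(\mathscr A,\mathcal O^\delta)$ --- proceed by the dual bookkeeping.

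The step I expect to be the genuine obstacle is the direction that builds a winning strategy for Two in $\mathsf G_1(\mathscr A,\mathcal O^\delta)$ out of a winning strategy for One in the compact-$G_\delta$ game. Here One's compact-set moves $K_n$ must be recovered from Two's job of selecting a single $G_\delta$ set from an arbitrary Alster cover $\mathcal U_n$ presented by the opponent: the crux is that, because $\mathcal U_n$ is an Alster cover, the compact set $K_n$ dictated by the strategy is contained in some $W_n\in\mathcal U_n$, and Two plays that $W_n$. The bookkeeping must ensure that the sequence $(K_n,W_n)_{n\in\omega}$ so produced is a legal play consistent with One's strategy in the compact-$G_\delta$ game, so that One's victory there (the $W_n$ covering $X$) translates into Two's victory in $\mathsf G_1(\mathscr A,\mathcal O^\delta)$. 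Making the indexing precise so that each selection is genuinely a function of the visible history --- and verifying that the defining property of Alster covers is exactly what licenses the choice of $W_n\supseteq K_n$ --- is the delicate point; once this is set up, everything reduces to the defining property of Alster covers in Definition \ref{alstercover}, just as the original argument of Telg\'arsky reduces to the defining property of $k$-covers, so I would simply indicate that the remaining verifications are identical to those in \cite[Theorem 6.2]{telg2}.
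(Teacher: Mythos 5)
Your two worked-out translations are correct, and one of the directions you lump into ``dual bookkeeping'' (from a winning strategy for One in $\mathsf G_1(\mathscr A,\mathcal O^\delta)$ to a winning strategy for Two in the compact-$G_\delta$ game) is indeed routine. But you have located the difficulty in the wrong place, and the implication that carries the actual content of the theorem is missing. The direction you call ``the genuine obstacle'' --- building a winning strategy for Two in $\mathsf G_1(\mathscr A,\mathcal O^\delta)$ from a winning strategy for One in the compact-$G_\delta$ game --- is easy: Two covers the compact set dictated by One's strategy using the defining property of Alster covers, exactly as you describe, and nothing delicate happens beyond fixing the simulated history. What cannot be obtained by dualizing any of your translations is the remaining implication: if \emph{Two} has a winning strategy $\rho$ in $\mathsf G_1(\mathscr A,\mathcal O^\delta)$, then \emph{One} has a winning strategy in the compact-$G_\delta$ game. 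Here One must manufacture compact sets out of a strategy that only selects elements from Alster covers, and no amount of bookkeeping does that.

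The missing idea --- the heart of \cite[Theorem 6.2]{telg2}, which is what the paper's one-line proof by analogy with Proposition \ref{dualc-o} is really invoking --- is an absorption lemma: for every finite sequence $s$ of Alster covers there is a compact set $K_s\subseteq X$ such that for every $G_\delta$ set $W\supseteq K_s$ there is an Alster cover $\mathcal W$ with $\rho(s^\frown\mathcal W)\subseteq W$. (Proof: otherwise, for each compact $K$ there is a $G_\delta$ set $W_K\supseteq K$ absorbing no response of $\rho$ at $s$; but then $\{W_K:K\subseteq X\mbox{ compact}\}$ is itself an Alster cover, hence a legal move for One, and $\rho$'s response to it equals some $W_{K_0}$, which is contained in $W_{K_0}$ --- a contradiction. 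Checking that this family of ``bad'' sets is an Alster cover is the essential use of Definition \ref{alstercover} in the whole proof, not the use you single out.) Given the lemma, One plays $K_s$; whatever $G_\delta$ set $W\supseteq K_s$ Two answers, One picks $\mathcal W$ with $\rho(s^\frown\mathcal W)\subseteq W$, extends the simulated $\mathsf G_1$-play by $\mathcal W$, and repeats; since $\rho$ is winning, Two's simulated selections cover $X$, hence so do the larger sets actually played in the compact-$G_\delta$ game, and One wins. Without this lemma your plan stalls precisely at this implication, so the proposal as written has a genuine gap.
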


Propositions \ref{alster-sel} and \ref{c-gd} yield:

\begin{corol}
\label{alstergame}

If Two does not have a winning strategy in the compact-$G_\delta$ game
on a topological space $X$, then $X$ is an Alster space.

\end{corol}

As usual, the characterization of the selective property in terms of
its naturally associated game is of interest:

\begin{prob}
\label{alst-equiv}
Is the Alster property equivalent to  Two not having a winning
strategy in the compact-$G_\delta$ game?
\end{prob}

%This is the case for spaces in which every
%$\mathcal{U}\in\mathcal{K}_\delta$ contains a countable
%$\mathcal{V}\in\mathcal{K}_\delta$. ($\sim$Pawlikowski???)
%
%\newtheorem{cdeltasubc}[2]{Question}
%\begin{cdeltasubc}
%\label{cdeltasubc}
%
%Under which conditions does every $\mathcal{U}\in\mathcal{K}_\delta$
%contain a countable $\mathcal{V}\in\mathcal{K}_\delta$?
%
%\end{cdeltasubc}

Finally, Theorem 5.1 of \cite{telg2} and Corollary 3 of \cite{topsoe}
can be combined to yield:

\begin{thm}[Telg\'arsky \cite{telg2,topsoe}]
\label{telg-topsoe}

Consider the following statements about a topological space $X$:

\begin{itemize}

\item[$(a)$]
One has a winning strategy in the compact-$G_\delta$ game on $X$;

\item[$(b)$]
One has a winning strategy in the compact-open game on $X$;

\item[$(c)$]
Two has a winning strategy in the Menger game on $X$.

\end{itemize}

Then $(a)\leftrightarrow(b)\rightarrow(c)$. Furthermore, if $X$ is
regular, then the three statements are equivalent.

\end{thm}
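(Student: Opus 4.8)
The plan is to prove the two unconditional implications $(a)\to(b)$ and $(b)\to(a)$ together with $(b)\to(c)$, and then to close the cycle with $(c)\to(b)$ under the additional hypothesis of regularity; since $(a)\leftrightarrow(b)$ and $(b)\leftrightarrow(c)$ will then both hold for regular $X$, all three statements become equivalent in that case. Throughout I would pass to the dual descriptions furnished by Propositions~\ref{dualc-o} and~\ref{c-gd} and by the identification of the Menger game with $\mathsf G_1(\mathcal O^*,\mathcal O)$: statements $(a)$, $(b)$ and $(c)$ then say, respectively, that Two has a winning strategy in $\mathsf G_1(\mathscr A,\mathcal O^\delta)$, in $\mathsf G_1(\mathcal K,\mathcal O)$ and in $\mathsf G_1(\mathcal O^*,\mathcal O)$, the relevant families being nested as $\mathcal O^*\subseteq\mathcal K\subseteq\mathscr A$.

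Two of the implications are then pure monotonicity arguments. For $(b)\to(c)$, note that since $\mathcal O^*\subseteq\mathcal K$ every cover One is allowed to play in $\mathsf G_1(\mathcal O^*,\mathcal O)$ is already a legal move of One in $\mathsf G_1(\mathcal K,\mathcal O)$; hence a winning strategy for Two in the latter game wins the former verbatim. For $(a)\to(b)$, One's moves in $\mathsf G_1(\mathcal K,\mathcal O)$ are $k$-covers, which are Alster covers ($\mathcal K\subseteq\mathscr A$) consisting of \emph{open} sets; feeding them to a winning strategy of Two for $\mathsf G_1(\mathscr A,\mathcal O^\delta)$ produces replies that are open sets, so that the resulting cover lies in $\mathcal O$ rather than merely in $\mathcal O^\delta$, and the same strategy wins $\mathsf G_1(\mathcal K,\mathcal O)$. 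Neither step uses any separation axiom.

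The genuinely nontrivial unconditional step is $(b)\to(a)$ (this is Theorem~5.1 of~\cite{telg2}): here One is allowed, in $\mathsf G_1(\mathscr A,\mathcal O^\delta)$, to play Alster covers by $G_\delta$ sets, and Two must reply with a single $G_\delta$ member whose \emph{union} over all innings covers $X$. Writing each $G_\delta$ set $W$ in One's cover as a decreasing intersection $W=\bigcap_m O^W_m$ of open sets, one obtains from each Alster cover an associated $k$-cover of open approximations, to which the given winning strategy for $\mathsf G_1(\mathcal K,\mathcal O)$ can be applied. The difficulty is that this strategy only guarantees that the \emph{open approximations} chosen by Two cover $X$, whereas Two is required to cover $X$ with the strictly smaller $G_\delta$ sets themselves; reconciling this --- so that the single member Two is permitted per inning already absorbs the countably many approximation levels that must be controlled --- calls for a diagonal bookkeeping across the innings, and I expect this to be the main obstacle.

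Finally, for $(c)\to(b)$ when $X$ is regular I would convert a winning strategy $\rho$ of Two in the Menger game into a winning strategy of One in the compact-open game, maintaining on the side a simulated Menger play: from Two's open reply $U_n\supseteq K_n$ to One's compact move one manufactures the next cover presented to $\rho$, and from the finite subfamily $\mathcal F_n$ returned by $\rho$ one reads off One's next compact set $K_{n+1}$, arranging that the coverage $\bigcup_n\bigcup\mathcal F_n=X$ guaranteed by $\rho$ is inherited by the sets $U_n$ and so witnesses a win for One. Regularity enters decisively through the shrinking property that, for a compact $K$ and open $U\supseteq K$, there is an open $V$ with $K\subseteq V\subseteq\overline V\subseteq U$: it is this separation of each compact set from the complement of Two's reply that allows the finite subfamilies selected by $\rho$ to be transferred back into the compact-open play. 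That the hypothesis cannot be dropped is shown later by Example~\ref{R-ctbl}. Combining the four implications yields $(a)\leftrightarrow(b)\to(c)$ in general and the full equivalence for regular $X$.
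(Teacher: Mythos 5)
Your dualization via Propositions~\ref{dualc-o} and~\ref{c-gd} and the two monotonicity implications $(a)\to(b)$ and $(b)\to(c)$ (from $\mathcal O^*\subseteq\mathcal K\subseteq\mathscr A$) are correct and require no separation axioms. Bear in mind, though, that the paper itself does not prove this theorem---it simply combines Theorem 5.1 of \cite{telg2} with Corollary 3 of \cite{topsoe}---so the substance of your proposal rests entirely on the two hard implications, and both have genuine gaps. For $(b)\to(a)$ you stop at ``I expect this to be the main obstacle,'' which is not a proof; worse, the dual formulation you chose is the wrong place to attempt it. The workable argument runs on One's side in the original games: given a winning strategy $\sigma$ for One in the compact-open game, One plays the compact-$G_\delta$ game by maintaining a countable family of simulated positions (finite sequences of open sets forming legal runs of Two-responses against $\sigma$), and in inning $n$ plays the union of the compact sets $\sigma(p)$ over the finitely many positions $p$ scheduled for that inning---legal precisely because a finite union of compact sets is compact. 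Two's answer $W_n=\bigcap_m O^n_m$ contains each such $\sigma(p)$, so every $p^\frown O^n_m$ is again a legal position, to be scheduled for a later inning; if some $x$ avoided every $W_n$, choosing at each stage an approximation $O^{n_k}_{m_k}$ omitting $x$ assembles a complete play against $\sigma$ all of whose Two-moves omit $x$, contradicting that $\sigma$ wins. In your dual picture the strategy being transferred belongs to Two in $\mathsf G_1(\mathcal K,\mathcal O)$, who makes exactly one selection per inning and therefore cannot merge several simulated plays into a single move; moreover that strategy's guarantee concerns the open approximations it itself selects, and no choice of $k$-covers fed into it can force it to select approximations missing an unknown point. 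So the ``diagonal bookkeeping'' you hope for has no evident implementation on that side; one should prove the implication for One in the compact-open/compact-$G_\delta$ form and only then dualize.

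Your sketch of $(c)\to(b)$ for regular $X$ also fails as described. There is no compact set to ``read off'' from the finite subfamily $\mathcal F_n$ returned by $\rho$---the set $\bigcup\mathcal F_n$ is open, and its closure need not be compact---and One cannot ``arrange'' that $\bigcup_n\bigcup\mathcal F_n\subseteq\bigcup_n U_n$: Two's replies need only contain the compact sets One plays and may otherwise be as small as Two likes, so the coverage guaranteed by $\rho$ is never inherited directly by a single linear simulation. The actual mechanism is the one the paper displays in its alternative proof of Corollary~\ref{menger->alster} (following \cite{direct}): One plays the sets $K_s=\bigcap\{\overline{\bigcup\rho(s^\frown\mathcal U)}:\mathcal U\in\mathcal O_X\}$, indexed by finite sequences $s$ of open covers; regularity is used to prove each $K_s$ compact (Claim 1 there), which is a different use from the compact-shrinking property you invoke. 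Then, given Two's reply $U\supseteq K_s$, Lindel\"ofness of the closed set $X\setminus U$ yields a countable family $\mathcal C_s$ of covers with $\bigcap\{\overline{\bigcup\rho(s^\frown\mathcal U)}:\mathcal U\in\mathcal C_s\}\subseteq U$ (Claim 2 there), and the same scheduling-plus-diagonalization as above turns any point missed by all the $U_n$ into a play against $\rho$ that $\rho$ loses---a contradiction argument, not an inheritance of coverage. In short: your decomposition into implications is the right one and the easy arrows are fine, but both arrows carrying actual content are missing their key constructions.
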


This allows us to relate the Menger game to the Alster property:

\begin{corol}
\label{menger->alster}

If  Two has a winning strategy in the Menger game on a
regular space $X$, then $X$ is an Alster space.

\end{corol}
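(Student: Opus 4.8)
The plan is to chain together the two preceding results, Theorem \ref{telg-topsoe} and Corollary \ref{alstergame}, via the elementary observation that in a single game the two players cannot both possess a winning strategy.

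First I would invoke the regularity hypothesis. Since $X$ is regular and Two has a winning strategy in the Menger game --- that is, statement $(c)$ of Theorem \ref{telg-topsoe} holds --- the equivalence asserted there for regular spaces gives statement $(a)$: One has a winning strategy in the compact-$G_\delta$ game on $X$. This is the only place regularity is used, and it is where it is essential, since without it Theorem \ref{telg-topsoe} yields only the implication $(a)\to(c)$ and not its converse.

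Next I would record the general game-theoretic fact that at most one player can have a winning strategy in a given game: if both One and Two had winning strategies $\sigma$ and $\tau$, then playing $\sigma$ against $\tau$ would produce a single run of the game that each strategy declares a win for its respective owner, which is absurd. Applying this to the compact-$G_\delta$ game, the conclusion of the previous paragraph (that One has a winning strategy) forces that Two does \emph{not} have a winning strategy in the compact-$G_\delta$ game.

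Finally I would appeal to Corollary \ref{alstergame}, whose hypothesis --- Two not having a winning strategy in the compact-$G_\delta$ game --- is exactly what the previous step delivers; it then yields that $X$ is an Alster space, completing the proof. There is no genuine obstacle here beyond correctly assembling the pieces; the one point deserving a moment's care is to make sure the implication $(c)\Rightarrow(a)$ really does pass through the regular-space half of Theorem \ref{telg-topsoe}, so that the regularity hypothesis in the statement is in fact being put to use.
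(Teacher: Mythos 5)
Your proof is correct and is essentially the paper's own argument: the paper's proof is the one-line "By Theorem \ref{telg-topsoe} and Corollary \ref{alstergame}," and your write-up simply makes explicit the intermediate steps (regularity giving $(c)\Rightarrow(a)$, and the standard fact that both players cannot have winning strategies) that this citation chain implicitly uses. Nothing is missing.
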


\begin{proof}

By Theorem \ref{telg-topsoe} and Corollary \ref{alstergame}.
\end{proof}

In \cite[Corollary 4]{topsoe}, Telg\'arsky showed:

\begin{thm}[Telg\'arsky \cite{topsoe}]
\label{telg.metr}

If $X$ is a metrizable space, then Two has a winning strategy in the
Menger game on $X$ if and only if $X$ is $\sigma$-compact.

\end{thm}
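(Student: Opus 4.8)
The plan is to handle the two implications separately, reducing the nontrivial direction to the Alster property by means of the results already established.

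For the easy direction, suppose $X$ is $\sigma$-compact and write $X=\bigcup_{n\in\omega}K_n$ with each $K_n$ compact; replacing $K_n$ by $K_0\cup\dots\cup K_n$, I may assume the sequence is increasing. I would then describe a winning strategy for Two in the Menger game (Definition \ref{def.menger-game}): at inning $n$, after One plays an open cover $\mathcal U_n$, Two uses compactness of $K_n$ to extract a finite $\mathcal F_n\subseteq\mathcal U_n$ covering $K_n$ and plays it. Since $\bigcup_{n\in\omega}\mathcal F_n$ covers $\bigcup_{n\in\omega}K_n=X$, this strategy is winning. This part is routine.

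For the direction of genuine interest, suppose Two has a winning strategy in the Menger game on the metrizable space $X$. Since every metrizable space is regular, Corollary \ref{menger->alster} immediately gives that $X$ is an Alster space. It then remains to prove the purely topological fact that a \emph{metrizable} Alster space is $\sigma$-compact. Here I would exploit metrizability in exactly one place: in a metric space every closed set is $G_\delta$ (writing $F=\bigcap_n\{x:d(x,F)<1/n\}$), and compact sets are closed, so every compact subset of $X$ --- in particular every singleton --- is a $G_\delta$ subset of $X$.

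With this observation the argument closes quickly. Let $\mathcal C$ be the family of \emph{all} compact subsets of $X$. By the previous paragraph, $\mathcal C$ is a family of $G_\delta$ sets that covers $X$ (singletons already cover it), and every compact $K\subseteq X$ is contained in the member $K\in\mathcal C$; hence $\mathcal C$ is an Alster cover in the sense of Definition \ref{alstercover}. Applying the Alster property (Definition \ref{def.alster}) to $\mathcal C$ yields a countable subfamily $\{K_n:n\in\omega\}\subseteq\mathcal C$ with $X=\bigcup_{n\in\omega}K_n$, which is precisely a presentation of $X$ as a countable union of compact sets. I expect the only genuine conceptual step to be recognizing that the collection of all compact sets is itself a legitimate Alster cover; once metrizability is used to render compact sets $G_\delta$, the Alster property does all the remaining work, so no delicate estimates or descriptive-set-theoretic input are needed.
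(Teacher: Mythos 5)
Your proof is correct, and it is essentially the route the paper itself indicates: the theorem is stated there as Telg\'arsky's result without proof, but the discussion immediately following it observes that Corollary \ref{menger->alster} (applicable since metrizable spaces are regular), combined with the fact that every compact subset is a $G_\delta$ (true in metrizable and, more generally, in regular hereditarily Lindel\"of spaces), ``clearly implies $\sigma$-compactness in the presence of the Alster property'' --- which is exactly your argument of applying the Alster property to the cover consisting of all compact subsets. Both directions of your write-up are sound.
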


In \cite{bz}, T. Banakh and L. Zdomskyy noted that Telg\'arsky's
argument would follow with ``regular hereditarily Lindel\"of'' in
place of ``metrizable''.
%, in view of the well-known fact that, in a
%hereditarily Lindel\"of regular space, every compact subset is a
%$G_\delta$ --- the exact condition (alongside regularity)
%required in Telg\'arsky's proof of Theorem \ref{telg.metr}.
%Since this condition clearly implies $\sigma$-compactness in the
%presence of the Alster property, Corollary
%\ref{menger->alster} extends the Banakh-Zdomskyy version of Theorem
%\ref{telg.metr}.
Since hereditarily Lindel\"of regular spaces have the property
that every compact subset is a $G_\delta$ (a condition that clearly
implies $\sigma$-compactness in the presence of the Alster property),
Corollary \ref{menger->alster} extends the Banakh-Zdomskyy version of
Theorem \ref{telg.metr}.

\begin{comment}

For the convenience of the reader, we include a proof of the
well-known fact mentioned above:

\begin{prop}[folklore]
\label{alster-hered}

If a regular space $X$ is hereditarily Lindel\"of, then every closed
subset of $X$ is a $G_\delta$ in $X$.

\end{prop}

\begin{proof}

Let $F\subseteq X$ be closed. As $X$ is $T_1$, we have
$F=\bigcap\mathcal N$, where $\mathcal N$ is the family of all open
subsets of $X$ that include $F$. For each $U\in\mathcal N$, fix an open
set $V_U\subseteq X$ with $F\subseteq
V_U\subseteq\overline{V_U}\subseteq U$ --- this is possible since the
hypotheses imply normality of $X$. It follows that $F=\bigcap\mathcal
N=\bigcap_{U\in\mathcal N}\overline{V_U}$, whence $X\setminus
F=\bigcup_{U\in\mathcal N}(X\setminus\overline{V_U})$. Now use the
fact that $X$ is hereditarily Lindel\"of to get a countable $\mathcal
N_0\subseteq\mathcal N$ satisfying $X\setminus
F=\bigcup_{U\in\mathcal N_0}(X\setminus\overline{V_U})$. Then
$F=\bigcap_{U\in\mathcal N_0}\overline{V_U}=
\bigcap_{U\in\mathcal N_0}V_U$, as required.
\end{proof}

\end{comment}

Telg\'arsky's proof of the equivalence between $(b)$ and $(c)$ in
Theorem \ref{telg-topsoe} is rather indirect.
Inspired by \cite{direct}, we can give a more straightforward proof of
Corollary \ref{menger->alster}, which does not depend on the
aforementioned equivalence:

\begin{proof}[An alternative proof of Corollary \ref{menger->alster}]

Let $\sigma:\mbox{}^{<\omega}\mathcal
O_X\setminus\{\emptyset\}\rightarrow[\tau_X]^{<\aleph_0}$ be a winning
strategy for Two in the Menger game on $X$. Now let $\mathcal W$ be an
Alster cover of $X$. Our task is to find a countable subset of
$\mathcal W$ that covers $X$.

The following claim is taken from \cite{direct}.

\emph{Claim 1.}
For every $s\in\mbox{}^{<\omega}\mathcal O_X$, the set
$K_s=\bigcap\{\overline{\bigcup\sigma(s^\frown\mathcal
U)}:\mathcal U\in\mathcal O_X\}$ is compact.

Indeed, let $\mathcal V$ be a cover of $K_s$ by open subsets of $X$.
For each $x\in K_s$, pick an open neighbourhood $U_x$ of $X$ such that
$\overline{U_x}$ is included in some element of $\mathcal V$; now, for
each $x\in X\setminus K_s$, pick an open neighbourhood $U_x$ of $X$
with $\overline{U_x}\subseteq X\setminus K_s$. Consider then $\mathcal
U_0=\{U_x:x\in X\}\in\mathcal O_X$, and let $F\in[X]^{<\aleph_0}$ be
such that $\sigma(s^\frown\mathcal U_0)=\{U_x:x\in F\}$. Note that
$K_s\subseteq\overline{\bigcup\sigma(s^\frown\mathcal
U_0)}=\bigcup\{\overline{U_x}:x\in F\}$; thus, if for each $x\in
F\cap K_s$ we pick a $V_x\in\mathcal V$ with $\overline{U_x}\subseteq
V_x$, we will have $K_s\subseteq\bigcup\{V_x:x\in F\cap K_s\}$.
%, as $K_s\cap\overline{U_x}=\emptyset$ for all $x\in F\setminus K_s$.
This proves Claim 1.

For each $s\in\mbox{}^{<\omega}\mathcal O_X$,
we can then fix a $W_s\in\mathcal W$ with $K_s\subseteq W_s$.

\emph{Claim 2.}
For every $s\in\mbox{}^{<\omega}\mathcal O_X$, there is a countable
$\mathcal C_s\subseteq\mathcal O_X$ such that
$K_s\subseteq\bigcap\{\overline{\bigcup\sigma(s^\frown\mathcal
  U)}:\mathcal U\in\mathcal C_s\}\subseteq W_s$.

Since $K_s\subseteq W_s$, the set
$\{X\setminus\overline{\bigcup\sigma(s^\frown\mathcal U)}:\mathcal
U\in\mathcal O_X\}$ is an open cover of $X\setminus W_s$. But
$X\setminus W_s$ is an $F_\sigma$-subset of $X$; since our hypothesis
implies that $X$ is a Lindel\"of space, it follows that $X\setminus
W_s$ is Lindel\"of as well, whence there is a countable $\mathcal
C_s\subseteq\mathcal O_X$ such that $X\setminus
W_s\subseteq\bigcup\{X\setminus\overline{\bigcup\sigma(s^\frown\mathcal
  U)}:\mathcal U\in\mathcal C_s\}$. This proves Claim 2.

Now define recursively $\mathcal A_0=\mathcal C_\emptyset$ and
$\mathcal A_{n+1}=\mathcal A_n\cup\bigcup\{\mathcal
C_s:s\in\mbox{}^{n+1}\mathcal A_n\}$ for all
$n\in\omega$.
Let $\mathcal A=\bigcup\{\mathcal A_n:n\in\omega\}$.
We will show that the countable subset
$\mathcal W_0=\{W_s:s\in\mbox{}^{<\omega}\mathcal A\}$ of $\mathcal W$
is a cover of $X$.

Suppose, to the contrary, that there is $p\in
X\setminus\bigcup\mathcal W_0$.
Since $p\notin W_\emptyset$, there is some $\mathcal U_0\in\mathcal
C_\emptyset$ such that $p\notin\overline{\bigcup\sigma((\mathcal
  U_0))}$. We also have $p\notin W_{(\mathcal U_0)}$, so there is
$\mathcal U_1\in\mathcal C_{(\mathcal U_0)}$ such that
$p\notin\overline{\bigcup\sigma((\mathcal U_0,\mathcal U_1))}$.
By proceeding in this fashion ($p\notin W_{(\mathcal U_0,\mathcal
  U_1)}$, and so on), we obtain a play
$$
(\mathcal U_0,\sigma((\mathcal U_0)),\mathcal U_1,\sigma((\mathcal
U_0,\mathcal U_1)),\mathcal U_2,\sigma((\mathcal U_0,\mathcal
U_1,\mathcal U_2)),\mathcal U_3,\dots)
$$
of the Menger game on $X$ such that
$p\notin\overline{\bigcup\sigma((\mathcal U_0,\mathcal
  U_1,\dots,\mathcal U_k))}$ for all $k\in\omega$. But this is a
contradiction, since  Two follows the winning strategy $\sigma$
in this play.
\end{proof}

%The proof of Proposition \ref{menger->alster} can be easily modified
%to show that,
A similar argument shows that,
if ``Menger'' is replaced by ``Rothberger'' in
Proposition \ref{menger->alster},
%its statement,
the conclusion can be replaced by ``$X_\delta$ is Lindel\"of'' ---
here, $X_\delta$ is the set $X$ endowed with the topology generated by
the $G_\delta$ subsets from its original topology.
But in this case we can avoid the requirement of any separation axioms
by making use of Theorem \ref{dual-galvin} (cf. Theorem 6.1 of
\cite{telg1} and Theorem 2 of \cite{galvin}):

\begin{prop}
\label{roth->delta}

If Two has a winning strategy in the Rothberger game on a
topological space $X$, then $X_\delta$ is a Lindel\"of space.

\end{prop}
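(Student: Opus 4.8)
The plan is to exploit the duality between the Rothberger game and the point-open game (Theorem~\ref{dual-galvin}) to convert Two's winning strategy into a combinatorial object that controls all compact-free covers by $G_\delta$ sets. By Theorem~\ref{dual-galvin}, if Two has a winning strategy in the Rothberger game on $X$, then One has a winning strategy in the point-open game on $X$. Let $\sigma$ be such a strategy for One in the point-open game. The idea is that $\sigma$ tells One which point to play at each stage as a function of Two's previous open responses; the key structural fact is that the points One plays according to $\sigma$ form, in a suitable tree-indexed sense, a ``catching'' family: any cover of $X$ by $G_\delta$ sets must, along some branch, fail to contain a point One eventually plays, which would contradict $\sigma$ being winning.

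The key steps, in order, would be as follows. First I would fix a winning strategy $\sigma$ for One in the point-open game. I would then index the possible finite sequences of Two's open moves by the tree $\mbox{}^{<\omega}\tau_X$ (or an appropriate countable refinement), so that $\sigma(s)\in X$ denotes the point One plays after Two has produced the finite sequence $s$ of open sets. Second, given an arbitrary cover $\mathcal W$ of $X$ by $G_\delta$ subsets, I want to extract a countable subcover. To do this I would, at each node $s$, write the $G_\delta$ set $W_s\in\mathcal W$ chosen to contain $\sigma(s)$ as $W_s=\bigcap_{n\in\omega}O^s_n$ with $O^s_n$ open, and feed these open approximations to Two as legal moves in the point-open game. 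Third, I would build recursively a countable subfamily of $\mathcal W$ by closing off under the (countably many) open approximations that arise, exactly as in the recursion $\mathcal A_{n+1}=\mathcal A_n\cup\bigcup\{\dots\}$ in the alternative proof of Corollary~\ref{menger->alster}, obtaining a countable set of nodes and hence a countable $\mathcal W_0\subseteq\mathcal W$.

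Finally I would argue by contradiction that $\mathcal W_0$ covers $X$ in the $G_\delta$-topology, i.e.\ that $\bigcup\mathcal W_0=X$. Suppose some $p\in X$ lies outside every member of $\mathcal W_0$. Then for each node $s$ built into the recursion, $p\notin W_s$, so $p$ misses one of the open approximations $O^s_n$ of $W_s$; I would use these misses to have Two legally respond with open sets avoiding $p$ at every inning, thereby producing a full play of the point-open game consistent with One following $\sigma$ in which $p$ is never covered. Since $\sigma$ is a winning strategy for One, the union of Two's moves must cover $X$ and in particular must contain $p$, giving the contradiction. This shows $\mathcal W_0$ is a countable subcover, so $X_\delta$ is Lindel\"of.

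The main obstacle I anticipate is the bookkeeping that links the $G_\delta$ sets of $\mathcal W$ to legal open moves for Two in the point-open game: a single $G_\delta$ set decomposes into countably many open approximations, and I must arrange the recursion so that Two's moves (which must be open) simultaneously (i) are legal responses to the points One plays and (ii) can be steered to avoid a putative uncovered point $p$ along the whole branch. Getting this correspondence to respect the tree structure while keeping the resulting subfamily countable is the delicate part; the rest is the same contradiction-via-winning-strategy scheme already used for the Menger case.
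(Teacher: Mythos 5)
Your proposal is correct and follows essentially the same route as the paper's own proof: dualize to the point-open game via Theorem~\ref{dual-galvin}, build a tree of $G_\delta$ sets $W_s\ni\sigma(s)$ whose countably many open approximations serve as Two's legal moves (the paper indexes this tree by $\mbox{}^{<\omega}\omega$, which is exactly your ``appropriate countable refinement''), and derive a contradiction from a play consistent with $\sigma$ in which Two's moves all avoid a putative uncovered point $p$. The bookkeeping you flag as delicate is handled in the paper precisely by letting the $k$-th child of node $s$ correspond to Two playing the $k$-th open approximation $U(W_s,k)$, which automatically keeps the tree countable and the moves legal.
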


\begin{proof}

By Theorem \ref{dual-galvin}, this hypothesis is equivalent to the
existence of a winning strategy for  One in the point-open game.
Let then $\sigma:\mbox{}^{<\omega}\tau\rightarrow X$ be such a
strategy, where $\tau$ is the topology of $X$.

Now let $\mathcal W$ be a cover of $X$ by $G_\delta$ subsets. For each
$W\in\mathcal W$, fix a sequence $(U(W,n))_{n\in\omega}$ of open sets
with $W=\bigcap_{n\in\omega}U(W,n)$. Proceeding by induction on
$n\in\omega$, we shall assign to each $s\in\mbox{}^n\omega$ an element
$W_s$ of $\mathcal W$ as follows.

First, pick $W_\emptyset\in\mathcal W$ such that $\sigma(\emptyset)\in
W_\emptyset$. Now let $n\in\omega$ be such that
$W_s\in\mathcal W$ has already been defined for all
$s\in\mbox{}^n\omega$. For each $s\in\mbox{}^n\omega$ and each
$k\in\omega$, choose $W_{s^\smallfrown k}\in\mathcal W$ satisfying
$\sigma(t_{s,k})\in W_{s^\smallfrown k}$, where
$t_{s,k}\in\mbox{}^{n+1}\tau$ is the sequence defined by
$t_{s,k}(i)=U(W_{s\upharpoonright i},s(i))$ for all $i<n$ and
$t_{s,k}(n)=U(W_s,k)$.

We claim that $\{W_s:s\in\mbox{}^{<\omega}\omega\}\subseteq\mathcal W$
is a cover of $X$. Suppose not, and fix $p\in
X\setminus\bigcup_{s\in\mbox{}^{<\omega}\omega}W_s$.
For $n\in\omega$, we can recursively pick $k_n\in\omega$ with
$p\notin U(W_{(k_i)_{i<n}},k_n)$.
%as $p\notin W_{(k_i)_{i<n}}$.
But then we get a contradiction from the fact that
$$
(\sigma(\emptyset),U(W_\emptyset,k_0),\sigma(U(W_\emptyset,k_0)),U(W_{(k_0)},k_1),\sigma(U(W_{(k_0)},k_1)),U(W_{(k_0,k_1)},k_2),\dots)
$$
is a play of the point-open game in which One plays according to
$\sigma$ and loses.
\end{proof}

We shall see later, in Example \ref{R-ctbl}, that the regularity
hypothesis in Corollary \ref{menger->alster} is essential.

The following diagram summarizes the connections between the
properties considered in this paper. We will now quote some results
from which some of the implications in the diagram follow.

%\begin{thm}[Tall \cite{tallD}]
%\label{tallD}
%
%The Continuum Hypothesis implies that every productively Lindel\"of
%space is Menger.
%
%\end{thm}

For the first result (proven in Theorem 5.2 of \cite{rice}),
recall that a topological space $X$ is \emph{scattered} if every
nonempty subspace $Y\subseteq X$ has an isolated point (relative to
$Y$).

\begin{thm}[Levy-Rice \cite{rice}]
\label{Xdelta}

If a regular space $X$ is Lindel\"of and scattered, then $X_\delta$ is
Lindel\"of.

\end{thm}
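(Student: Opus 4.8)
The plan is to reduce the statement to a combinatorial fact about covers by $G_\delta$ sets and then to exploit the Cantor--Bendixson structure of a scattered space. Since the $G_\delta$ subsets of $X$ form a base for $X_\delta$, the space $X_\delta$ is Lindel\"of if and only if every cover $\mathcal W$ of $X$ by $G_\delta$ subsets admits a countable subcover; so I fix such a $\mathcal W$ and look for countably many of its members covering $X$. Throughout I would use the Cantor--Bendixson analysis: let $X^{(\alpha)}$ be the $\alpha$-th derived set of $X$, let $\rho(x)$ be the unique ordinal with $x$ isolated in $X^{(\rho(x))}$ (defined for every $x$ precisely because $X$ is scattered), and let $\theta$ be the height, i.e. the least ordinal with $X^{(\theta)}=\emptyset$. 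The two structural facts I will lean on are that each $X^{(\alpha)}$ is closed in $X$, hence Lindel\"of, and that a relatively discrete subset of a Lindel\"of space that is also relatively closed must be countable.

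The cleanest route I see is a reflection argument. Fix a countable elementary submodel $M\prec H(\kappa)$, for $\kappa$ large, with $X$, its topology, the family $\mathcal W$, and the derived-set operation all in $M$, and set $\mathcal W_M=\mathcal W\cap M$, a countable subfamily. I would prove that $\mathcal W_M$ already covers $X$, which finishes the proof. Suppose not, and among the points of $X\setminus\bigcup\mathcal W_M$ choose $p$ of least rank $\alpha=\rho(p)$. By minimality, every point of rank $<\alpha$ lies in $\bigcup\mathcal W_M$, i.e. $X\setminus X^{(\alpha)}\subseteq\bigcup\mathcal W_M$; the goal is to contradict this by showing that $p$ too is covered by a member of $\mathcal W_M$.

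The heart of the matter, and the step I expect to be the \emph{main obstacle}, is to show that this minimal-rank uncovered point $p$ is reflected by $M$, and here all three hypotheses are used. Scatteredness gives an open $O\ni p$ with $O\cap X^{(\alpha)}=\{p\}$, so inside $O$ the point $p$ is the only point of rank $\ge\alpha$. Regularity lets me shrink $O$ to an open $O'$ with $\overline{O'}\subseteq O$, which I use to control the $G_\delta$ neighbourhoods supplied by $\mathcal W$ and to localize near $p$ the countable covering of the lower-rank closed set $X^{(\alpha+1)}$ already available by elementarity applied to the Lindel\"of subspace $X^{(\alpha)}$. Finally, elementarity transfers the true statement ``$\exists W\in\mathcal W$ with $p\in W$'' into $M$: the delicate point is that, although $p$ itself need not lie in $M$, the rank $\alpha$ and the relevant derived sets are definable from parameters in $M$, so $M$ sees a member of $\mathcal W$ covering the top of $X^{(\alpha)}$ near $p$. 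The phenomenon to watch is exactly the one exhibited by $\omega_1+1$, where a point such as $\omega_1$ is not a $G_\delta$ and its $\mathcal W$-neighbourhood is forced to swallow cofinally many lower points; it is this forcing that makes the reflection succeed.

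An equivalent, more hands-on version of the same idea is a transfinite recursion on $\gamma\le\theta$ proving that $X\setminus X^{(\gamma)}$ is coverable by countably many members of $\mathcal W$, so that $\gamma=\theta$ covers $X$. The successor step $\gamma=\delta+1$ amounts to covering the relatively discrete set $X^{(\delta)}\setminus X^{(\delta+1)}$ of isolated points of the closed subspace $X^{(\delta)}$: thickening, via regularity, a countable $\mathcal W$-cover of $X^{(\delta+1)}$ to a decreasing sequence of open sets $G_n$ with $\bigcap_n G_n\supseteq X^{(\delta+1)}$, the isolated points lying outside $\bigcap_n G_n$ form a relatively closed, relatively discrete $F_\sigma$ in the Lindel\"of space $X^{(\delta)}$, hence are countable and covered one by one. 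The genuinely hard part, once more the main obstacle, is the isolated points trapped inside $\bigcap_n G_n$ together with the limit stages of uncountable cofinality; these are precisely the configurations against which the $\omega_1+1$ example warns, and pushing them through is where the interplay of regularity, Lindel\"ofness and the scattered rank is essential, either by iterating the $F_\sigma$-argument along a secondary rank or, as above, by the elementary-submodel reflection, which dispatches all stages uniformly.
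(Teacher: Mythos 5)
The paper itself does not prove this statement: it is quoted from Levy--Rice \cite{rice} (Theorem 5.2 there), so your proposal can only be judged on its own terms --- and on those terms it is not yet a proof. Both of your sketches bottom out at the same unproved claim, which you yourself label ``the main obstacle''. In the elementary-submodel version, the fatal point is that the objects you want to reflect are not visible to $M$: the minimal uncovered rank $\alpha$ and the point $p$ are defined from the family $\mathcal W_M=\mathcal W\cap M$, and $\mathcal W_M$ is a subset of $M$ but not an \emph{element} of $M$ (it is defined using $M$ itself), so no statement mentioning $\alpha$, $p$, $X^{(\alpha)}$, or the sets $O,O'$ chosen from $p$ can be transferred by elementarity. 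Your assertion that ``the rank $\alpha$ and the relevant derived sets are definable from parameters in $M$'' is precisely what is in question, and it is given no justification; in the one concrete case you invoke ($\omega_1+1$, where the top point is definable and hence lies in $M$) the reflection is trivial, but that case gives no guidance for an uncovered point of intermediate rank lying outside $M$ together with its rank.

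The transfinite-recursion version has, in addition, a structural direction error. You build countable subfamilies covering $X\setminus X^{(\gamma)}$ from the bottom up, but your successor step requires ``a countable $\mathcal W$-cover of $X^{(\delta+1)}$'' to thicken --- a cover of the \emph{top} part of the space, which the recursion hypothesis (a cover of the bottom part $X\setminus X^{(\delta)}$) does not supply; producing it is equivalent to the theorem for the subspace $X^{(\delta+1)}$, so the induction would have to run downward on the derived sets rather than upward on their complements. Even granting that repair, the configurations you correctly identify as hard --- isolated points trapped inside $\bigcap_n G_n$, and limit levels, where the leftover set is a countable intersection of $F_\sigma$ sets and hence not obviously Lindel\"of --- are only named, never handled: you defer them back to ``the elementary-submodel reflection'', i.e., to the step that was never carried out. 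What you have is a correct reduction (countable subcovers of $G_\delta$ covers suffice), a correct inventory of the obstacles, and the correct structural tools, but the idea that actually overcomes the obstacles --- the content of Levy and Rice's argument --- is missing.
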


The next result is attributed to F. Galvin in \cite{gerlits} ---
see Theorem 47 of \cite{schtall}. Recall that a \emph{$P$-space} is a
topological space in which every $G_\delta$ subset is open.

\begin{prop}[Galvin]
\label{P-roth}

A $P$-space is Lindel\"of if and only if it is Rothberger.

\end{prop}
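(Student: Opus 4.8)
The plan is to prove Proposition \ref{P-roth} by establishing both implications separately, exploiting the characterizations of the Rothberger property via the selection principle $\mathsf S_1(\mathcal O,\mathcal O)$ (Definition \ref{def.roth}) and the defining feature of a $P$-space --- that countable intersections of open sets are open.

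For the implication Rothberger $\Rightarrow$ Lindel\"of, I would argue in general (this direction needs no $P$-space hypothesis): given any open cover $\mathcal U$, apply the Rothberger property to the constant sequence $\mathcal U_n=\mathcal U$ to extract a countable subcover $\{U_n:n\in\omega\}$. Hence every Rothberger space is Lindel\"of.

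The substantive direction is Lindel\"of $\Rightarrow$ Rothberger for $P$-spaces. Here I would start with a sequence $(\mathcal U_n)_{n\in\omega}$ of open covers and seek a selection $U_n\in\mathcal U_n$ whose union is $X$. The idea is to recursively exhaust $X$ one point (or one open set) at a time, using the $P$-space property to keep countable intersections open. Concretely, I would try to enumerate a suitable countable family: since $X$ is Lindel\"of and a $P$-space, a natural move is to observe that the cover $\mathcal U_0$ has a countable subcover, and then build by induction a sequence of selections so that at stage $n$ the chosen $U_0,\dots,U_{n-1}$ together with a cleverly shrunk remainder still cover $X$. The key leverage of the $P$-space axiom is that if we fix, for each point, a decreasing sequence of neighbourhoods drawn from the successive covers, their intersection is \emph{open}, so we can diagonalize: for each $x$, the set $G_x=\bigcap_{n}U_n^{(x)}$ (where $U_n^{(x)}\in\mathcal U_n$ is some set containing $x$) is open, the family $\{G_x:x\in X\}$ is an open cover, Lindel\"ofness yields a countable subcover $\{G_{x_k}:k\in\omega\}$, and then one reindexes the witnesses $U_n^{(x_k)}$ into a single selection $U_n\in\mathcal U_n$ covering $X$. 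The bookkeeping here --- arranging a bijection between $\omega$ and $\omega\times\omega$ so that each $\mathcal U_n$ is used exactly once while still covering every $G_{x_k}$ --- is the delicate point.

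The main obstacle I anticipate is precisely this diagonalization: one must select a \emph{single} $U_n$ from each $\mathcal U_n$ (not finitely many, as in Menger), so a countable family of points each contributing a full sequence of witnesses gives countably-many-times-countably-many candidate sets, which must be compressed into one selection per cover. I expect the resolution to exploit that, because each $G_{x_k}$ is open and the $x_k$ are countable, one can interleave along a partition $\omega=\bigsqcup_k A_k$ into infinite pieces, devoting the innings in $A_k$ to covering $G_{x_k}$; the $P$-space hypothesis guarantees the $G_{x_k}$ are genuinely open (hence usable as targets) rather than merely $G_\delta$. I would therefore expect the final step to verify that $\bigcup_n U_n\supseteq\bigcup_k G_{x_k}=X$, completing the Rothberger selection.
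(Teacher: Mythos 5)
The paper itself gives no proof of this proposition --- it is quoted as a known result of Galvin, with a pointer to Theorem 47 of \cite{schtall} --- so your argument can only be compared with the standard proof, and it is essentially that proof, correctly executed. The easy direction (Rothberger implies Lindel\"of, via a constant sequence of covers) is fine, and your key construction for the substantive direction is exactly right: fix witnesses $U_n^{(x)}\in\mathcal U_n$ with $x\in U_n^{(x)}$, use the $P$-space property to conclude that $G_x=\bigcap_{n\in\omega}U_n^{(x)}$ is open, and extract a countable subcover $\{G_{x_k}:k\in\omega\}$ by Lindel\"ofness. However, the step you single out as ``the delicate point'' is in fact trivial: since $G_{x_k}\subseteq U_n^{(x_k)}$ for \emph{every} $n$ (it is the intersection of these sets), the diagonal selection $U_k:=U_k^{(x_k)}\in\mathcal U_k$ already gives $\bigcup_{k\in\omega}U_k\supseteq\bigcup_{k\in\omega}G_{x_k}=X$; each point needs only one inning, so no bijection of $\omega$ with $\omega\times\omega$ and no partition of $\omega$ into infinite pieces is required. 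Your partition scheme does work --- for any $n\in A_k$ one has $G_{x_k}\subseteq U_n^{(x_k)}$ --- but it is unnecessary overhead, as are the inductive ``shrinking remainder'' sketch at the start and the word ``decreasing'' (the witnesses drawn from the covers need not be decreasing, and openness of the countable intersection in a $P$-space holds regardless).
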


Recall that a \emph{Michael space} is a Lindel\"of space $X$
such that $X\times\omega^\omega$ is not Lindel\"of. Michael spaces
have been constructed with the aid of several set-theoretical
hypotheses; see e.g. \cite{michael}, \cite{lawrence} and \cite{moore}.
In Proposition 3.1 of \cite{repovs}, it was shown:\footnote{We thank
  Lyubomyr Zdomskyy for bringing this result to our attention.}

\begin{thm}[Repov\v s-Zdomskyy \cite{repovs}]
\label{michael}

If there is a Michael space,
then every productively Lindel\"of space is Menger.

\end{thm}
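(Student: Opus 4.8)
The plan is to argue by contraposition: assuming that $X$ is productively Lindel\"of but \emph{not} Menger, I will manufacture, from a given Michael space $M$, a failure of Lindel\"ofness of $X\times M$, contradicting productive Lindel\"ofness (recall that a Michael space is by definition Lindel\"of). Since a productively Lindel\"of space is in particular Lindel\"of (take the one-point space as the second factor), I may use the Lindel\"of property of $X$ freely. The first step is to convert the failure of the Menger property into a concrete combinatorial object in the Baire space. Because $X$ is Lindel\"of, in testing the Menger property one may assume that the open covers produced by One are countable; a sequence of countable open covers then yields a continuous map from $X$ into a second countable space, and the standard machinery of selection principles turns the failure of the characterization in Theorem~\ref{menger} into a continuous map $f\colon X\to\omega^\omega$ whose image $D=f[X]$ is \emph{dominating} with respect to $\leq^*$. (This is the classical Hurewicz-type description of the Menger property via non-dominating images in $\omega^\omega$, which I would invoke at this point.)

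Next comes the product reduction. The map $f\times\mathrm{id}_M\colon X\times M\to\omega^\omega\times M$ is continuous with image exactly $D\times M$, and continuous images of Lindel\"of spaces are Lindel\"of; hence if $X\times M$ were Lindel\"of, then $D\times M$ would be Lindel\"of as well. The entire theorem is thereby reduced to the following bridge lemma: \emph{if $D\subseteq\omega^\omega$ is dominating and $M$ is a Michael space, then $D\times M$ is not Lindel\"of.} Granting the lemma, the implication ``$X\times M$ Lindel\"of $\Rightarrow$ $D\times M$ Lindel\"of'' contradicts it, so $X\times M$ is not Lindel\"of and $X$ fails to be productively Lindel\"of, as required.

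The bridge lemma is where I expect the real difficulty to lie. The only input is that $M\times\omega^\omega$ is not Lindel\"of, and one must transfer this failure to the possibly far smaller subspace $M\times D$. The natural handle is that $\omega^\omega$ is the increasing union of the compact sets $K_g=\prod_{n}\{0,\dots,g(n)\}$ and that a dominating $D$ --- after closing it under the countably many finite modifications, an operation that does not disturb Lindel\"ofness of $D\times M$, since the result is a countable union of continuous images of $D\times M$ --- makes $\{K_d:d\in D\}$ cofinal among the compact subsets of $\omega^\omega$; the idea is then to reflect a witnessing open cover of $M\times\omega^\omega$ through this cofinal family into $M\times D$. The subtle obstruction is that control over $D\times M$ governs only the \emph{slices} $\{d\}\times M$, whereas $M\times\omega^\omega=\bigcup_{d\in D}M\times K_d$ is governed by the compact \emph{fibers} $M\times K_d$, and bridging this slice-versus-fibre gap is the crux of the argument. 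I expect to resolve it by first extracting, from the bare hypothesis that \emph{some} Michael space exists, a combinatorially more transparent one: a Lindel\"of space whose non-Lindel\"of product with $\omega^\omega$ is witnessed by an uncountable closed discrete set whose second coordinates may be prescribed to lie in an arbitrary dominating family. Embedding that witness directly into $M\times D$ then yields the required failure of Lindel\"ofness and completes the proof.
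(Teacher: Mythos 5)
There is a genuine gap, and it sits at the very first step of your plan. (Note that the paper under review does not prove this theorem at all --- it cites Proposition 3.1 of \cite{repovs} --- so everything hinges on whether your argument stands on its own.) The claim that failure of the Menger property in a Lindel\"of space yields a \emph{continuous} map onto a dominating subset of $\omega^\omega$ is false in the generality you need. Given increasing countable open covers $\{U^n_m:m\in\omega\}$ witnessing non-Mengerness, the natural map $f(x)(n)=\min\{m:x\in U^n_m\}$ is only \emph{upper semicontinuous} (the set $\{x:f(x)(n)\le m\}=U^n_m$ is open); turning it into a continuous map requires refining the covers by clopen families, i.e.\ zero-dimensionality. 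The ``classical Hurewicz-type description'' you invoke is a theorem about zero-dimensional Lindel\"of spaces (or sets of reals), not about arbitrary spaces. Concretely, let $C\subseteq\mathbb R^2$ be the cone with apex $(0,1)$ over $P\times\{0\}$, where $P$ is the space of irrationals: $C$ is separable metrizable (hence Lindel\"of) and is not Menger, since it contains the non-Menger space $P$ as a closed subspace; but $C$ is connected, so \emph{every} continuous image of $C$ in $\omega^\omega$ is a singleton. Productively Lindel\"of spaces are not assumed zero-dimensional (nor even regular), so your reduction establishes nothing for such $X$, while the theorem must apply to them.

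Your ``bridge lemma'' ($D$ dominating, $M$ Michael $\Rightarrow$ $D\times M$ not Lindel\"of) is in fact true, but it is essentially the special case of the theorem for subspaces of $\omega^\omega$, and the route you sketch for it --- upgrading an arbitrary Michael space to one whose non-Lindel\"ofness is witnessed by an uncountable closed discrete set with prescribable second coordinates --- is precisely the part you leave unproven, and it is not a known reduction. The argument that proves both your lemma and the full theorem is one and the same, and it needs only the upper semicontinuity noted above. Fix an open cover $\mathcal W$ of $M\times\omega^\omega$ with no countable subcover. For $x\in X$ let $K_x=\prod_{n\in\omega}[0,f(x)(n)]$, a compact set; failure of Menger for the increasing covers says exactly that every $g\in\omega^\omega$ lies in some $K_x$. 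For each $(x,m)\in X\times M$, cover the compact set $\{m\}\times K_x$ by finitely many members $\mathcal W(x,m)$ of $\mathcal W$; by the tube lemma and compactness of $K_x$ there are an open $V(x,m)\ni m$ and an $N\in\omega$ with $V(x,m)\times\{g:g{\upharpoonright}N\le f(x){\upharpoonright}N\}\subseteq\bigcup\mathcal W(x,m)$. Then the sets $\bigl(\bigcap_{n<N}U^n_{f(x)(n)}\bigr)\times V(x,m)$ form an \emph{open} cover of $X\times M$ --- openness of $U^n_m$ is exactly what is used here, and continuity of $f$ is never needed --- and if $x'\in\bigcap_{n<N}U^n_{f(x)(n)}$ then $K_{x'}\subseteq\{g:g{\upharpoonright}N\le f(x){\upharpoonright}N\}$. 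Consequently, a countable subcover of this cover of $X\times M$ yields a countable subfamily of $\mathcal W$ covering $M\times\omega^\omega$, a contradiction; hence $X\times M$ is not Lindel\"of and $X$ is not productively Lindel\"of. Once the argument is organized this way, the detour through continuous images and dominating sets disappears; as written, your proposal proves neither the general case (its first step is false) nor the bridge lemma (its proof is missing).
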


Finally, a closer look at the proof of the folklore fact that
compact scattered spaces are Rothberger (see e.g.
\cite[Proposition 5.5]{aurichi}) shows that, in fact, we have:

\begin{prop}[folklore]
\label{comp-scattered}

Two has a winning strategy in the Rothberger game on every compact
scattered space.

\end{prop}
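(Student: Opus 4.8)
The plan is to prove the stronger statement by transfinite induction on the Cantor--Bendixson height of $X$, building at each stage a winning strategy for Two out of a strategy for a subspace of strictly smaller height. Recall the Cantor--Bendixson derivatives $X^{(0)}=X$, $X^{(\alpha+1)}=(X^{(\alpha)})'$ (the set of non-isolated points of $X^{(\alpha)}$), and $X^{(\lambda)}=\bigcap_{\beta<\lambda}X^{(\beta)}$ at limit stages; since $X$ is scattered, there is a least ordinal $h(X)$ with $X^{(h(X))}=\emptyset$. The first fact I would record is that, for nonempty compact $X$, the height $h(X)$ is a successor: if it were a limit $\lambda$, then $\{X^{(\beta)}:\beta<\lambda\}$ would be a decreasing family of nonempty closed subsets of the compact space $X$, so by the finite intersection property $X^{(\lambda)}$ would be nonempty, contradicting $h(X)=\lambda$. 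Write $h(X)=\alpha+1$ and set $D=X^{(\alpha)}$.

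The key structural observation is that $D$ is finite. Indeed, $D$ is closed in $X$, hence compact; and its derived set (computed in the subspace, which agrees with the ambient one since $D$ is closed) is $X^{(\alpha+1)}=\emptyset$, so $D$ is discrete. A compact discrete space is finite, say $D=\{p_1,\dots,p_m\}$.

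With this in hand I would describe Two's strategy in two phases. In the first $m$ innings Two covers $D$ one point at a time: when One plays the open cover $\mathcal U_k$ (for $0\le k<m$), Two selects some $U_k\in\mathcal U_k$ with $p_{k+1}\in U_k$. After these innings the set $W=U_0\cup\cdots\cup U_{m-1}$ is open and contains $D$, so the remainder $X'=X\setminus W$ is a closed --- hence compact --- subspace with $(X')^{(\alpha)}\subseteq X^{(\alpha)}\cap X'=D\cap X'=\emptyset$, using the fact that the derivatives of a closed subspace are contained in the ambient derivatives. Thus $X'$ is compact scattered with $h(X')\le\alpha<h(X)$, and by the induction hypothesis Two has a winning strategy $\tau$ in the Rothberger game on $X'$. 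In the remaining innings $m,m+1,\dots$ Two plays according to $\tau$ applied to the restricted covers $\{U\cap X':U\in\mathcal U_n\}$ of $X'$: whenever $\tau$ dictates choosing $U\cap X'$, Two picks the corresponding $U\in\mathcal U_n$. The sets chosen in this phase cover $X'$, and those of the first phase cover $W\supseteq X\setminus X'$, so the whole play covers $X$. The base case $X=\emptyset$ (and equally $h(X)=1$, where $D=X$ is finite and the remainder is empty) is immediate.

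I expect the main obstacle to be organizational rather than conceptual: the two supporting facts --- that the height of a nonempty compact scattered space is a successor (via compactness) and that $D$ is finite --- are the crux, together with the monotonicity $(X')^{(\beta)}\subseteq X^{(\beta)}$ for closed $X'$ that guarantees deleting a neighbourhood of $D$ strictly lowers the height. The remaining care is bookkeeping: one must check that Two's responses depend only on the history of One's moves (the subspace $X'$ is fully determined once One's first $m$ covers are revealed, after which $\tau$ takes over) and that passing to the restricted covers $\{U\cap X':U\in\mathcal U_n\}$ and re-indexing the innings $m,m+1,\dots$ as a fresh $\omega$-sequence does not affect the applicability of the inductive strategy.
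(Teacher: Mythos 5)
Your proof is correct. The paper itself gives no proof of this proposition --- it only remarks that ``a closer look'' at the folklore proof that compact scattered spaces are Rothberger (citing Proposition 5.5 of \cite{aurichi}) yields the game-theoretic strengthening --- and that folklore argument is exactly the one you wrote out: induction on Cantor--Bendixson height, using compactness to see the height is a successor and the top level $D=X^{(\alpha)}$ is finite, covering $D$ in finitely many innings, and recursing on the compact remainder of strictly smaller height.
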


Each arrow of the diagram has the number of the result
from which the implication follows, as well as the number of the
counterexample (in Section \ref{section.ex}) showing that
the implication cannot be reversed --- or that the
regularity assumption is necessary, if this is the case.

%Now we will see some relationships among the properties discussed in
%the previous sections and other properties. They are summarized in the
%diagram below.

\newpage

\begin{landscape}

\begin{tikzpicture}

\matrix (m) [matrix of nodes, row sep=20mm, column sep=20mm,
%  jump/.style={text width=20mm,anchor=center},
%  txt/.style={anchor=center},
]
{
& compact scattered & Lindel\"of scattered & \\
countable & Two $\uparrow$ Rothberger &
$X_\delta$ Lindel\"of & Rothberger \\
$\sigma$-compact & One $\uparrow$ compact-open &
Two $\not\mbox{}\!\!\!\uparrow$ compact-$G_\delta$ &
Two $\not\mbox{}\!\!\!\uparrow$ compact-open \\
& Two  $\uparrow$ Menger & Alster & $\mathsf
S_1(\mathcal K,\mathcal O)$ \\
& & productively Lindel\"of & Menger \\
};

\draw[->] (m-1-2) to (m-1-3);
\draw[->] (m-2-1) to (m-2-2);
\draw[->] (m-2-2) to node[auto]{\ref{roth->delta} (\ref{Ytelg})} (m-2-3);
\draw[->] (m-2-3) to node[auto]{\ref{P-roth} (\ref{Lspace})} (m-2-4);
\draw[->] (m-3-1) to node[auto]{(\ref{fortissimo})} (m-3-2);
\draw[->] (m-3-2) to node[auto]{\ref{telg-topsoe} (\ref{Ytelg})} (m-3-3);
\draw[->] (m-3-3) to node[auto]{(\ref{Lspace})} (m-3-4);
\draw[->] (m-4-2) to node[auto]{\ref{menger->alster} (\ref{Ytelg})}
node[auto,swap]{regular (\ref{R-ctbl})} (m-4-3);
\draw[->] (m-4-3) to node[auto]{\ref{alster->S1(K,O)} (\ref{Lspace})} (m-4-4);
\draw[->,densely dotted] (m-5-3) to node[auto,swap]{$\exists$ Michael space} node[auto]{\ref{michael} (\ref{Lspace})} (m-5-4);

\draw[->] (m-2-1) to (m-3-1);
\draw[->] (m-2-2) to node[auto]{\,\ref{dual-galvin} (\ref{2^omega})} (m-3-2);
\draw[<->] (m-3-2) to node[auto,swap]{regular (\ref{R-ctbl})\,} node[auto]{\,\ref{telg-topsoe}} (m-4-2);
\draw[->] (m-1-2) to node[auto]{\,\ref{comp-scattered}} (m-2-2);
\draw[->] (m-1-3) to node[auto]{\,\ref{Xdelta} (\ref{Ytelg})}
node[auto,swap]{regular (\ref{luzin-michael})\,} (m-2-3);
\draw[->] (m-2-3) to node[auto]{\,\ref{P-roth} + \ref{pawl} + \ref{dual-galvin} (\ref{2^omega})} (m-3-3);
\draw[->] (m-3-3) to node[auto,swap]{\ref{alstergame}\,} (m-4-3);
\draw[->] (m-4-3) to node[auto,swap]{\ref{thm.alster}\,} (m-5-3);
\draw[->] (m-2-4) to node[auto]{\,\ref{pawl} + \ref{dual-galvin} (\ref{2^omega})} (m-3-4);
\draw[->] (m-3-4) to node[auto,swap]{\ref{dualc-o}\,} (m-4-4);
\draw[->] (m-4-4) to node[auto]{\,(\ref{cov<d}, \ref{sierp+sorg})} (m-5-4);

\draw[->,>=latex',dotted,bend right=45] (m-4-3) to node[auto,swap]{?} (m-3-3);
\draw[->,>=latex',dotted,bend right=45] (m-4-4) to node[auto,swap]{?} (m-3-4);
\draw[->,>=latex',dotted,bend right=45] (m-5-3) to node[auto,swap]{?} (m-4-3);
%\draw[->,>=latex',dotted,bend right=45] (m-5-4) to node[auto,swap]{\,? (consistently?)} (m-4-4);

%\draw[->,>=latex',dashed,bend left=45] (m-5-4) to node{$-$}
%node[auto,text width=25mm,anchor=center]{$\mathrm{CH}$ or $\mathrm{cov}(\mathcal M)<\mathfrak d$} (m-4-4);

\end{tikzpicture}

\end{landscape}

\newpage

\section{Counterexamples}

\label{section.ex}

We shall now see that, unless the question of whether the converse
implication holds is indicated, the implications from the previous
diagram cannot be reversed (at least consistently); moreover, the
regularity assumptions that appear in the diagram cannot be dropped
(again, at least consistently). This will follow from the examples
listed below.

\begin{example}
\label{2^omega}

A compact Hausdorff space that is not Rothberger.

\end{example}

The Cantor set $2^\omega$ satisfies these conditions: it is folklore
that the sequence $(\mathcal U_n)_{n\in\omega}$ of open covers of
$2^\omega$ defined by $\mathcal
U_n=\{\pi_n^{-1}[\{0\}],\pi_n^{-1}[\{1\}]\}$, where
$\pi_n:2^\omega\rightarrow 2$ is the projection onto the $n$-th
coordinate, witnesses the failure of the Rothberger property.

\begin{example}
\label{fortissimo}

A Lindel\"of scattered regular space that is not $\sigma$-compact and
such that Two has a winning strategy in the Rothberger game.

\end{example}

This is the one-point Lindel\"ofication of an uncountable discrete
space, i.e. the space $X=A\cup\{p\}$, where $A$ is
uncountable and $p\notin A$, in which every point of $A$ is isolated
and cocountable subsets of $X$ are open. Two has a winning strategy in
the Rothberger game since, if she covers the point $p$ in the first
inning, only countably many points remain uncovered.

\begin{example}
\label{Lspace}

A Rothberger regular space that is not productively Lindel\"of.

\end{example}

J. Moore's L space \cite{jmooreL} is Rothberger
(see \cite[section 4]{schtall})
but has a non-Lindel\"of finite power\footnote{We thank Marion
  Scheepers and Boaz Tsaban for pointing this out to us.}
(see \cite[Theorem 3.4(2)]{tz} and \cite[Theorem 2]{arh}).

\begin{example}
\label{Ytelg}

There is a Lindel\"of regular non-scattered space $Y$ such that
$Y_\delta$ is Lindel\"of (hence, in particular, $Y$ is Alster) and Two
does not have a winning strategy in the Menger game on $Y$.

\end{example}

Let $Y$ be the space considered by Telg\'arsky in Section 7 of
\cite{telg2}: for each
$\lambda\in\lim(\omega_1)=\{\gamma\in\omega_1:\gamma$ is a limit
ordinal$\}$, fix a cofinal subset $C_\lambda\subseteq\lambda$ such
that $|C_\lambda\cap\alpha|<\aleph_0$ whenever $\alpha<\lambda$; the
set
$Y=\{\chi_{C_\lambda}:\lambda\in\lim(\omega_1)\}\cup\{\chi_F:F\in[\omega_1]^{<\aleph_0}\}$
is then regarded as a subspace of $2^{\omega_1}$ with the countable
box product topology $\tau$.\footnote{Here, $\chi_A$ denotes the
  function in $\mbox{}^{\omega_1}2$ satisfying
  $\{\alpha\in\omega_1:\chi_A(\alpha)=1\}=A$.}

Note that $Y=Y_\delta$ and that
$\{\chi_F:F\in[\omega_1]^{<\aleph_0}\}$ is a closed subset of $Y$
without isolated points.
In \cite{telg2}, it was proven that the compact-open game on $Y$ is
undetermined; thus, in view of Theorem \ref{telg-topsoe}, there is no
winning strategy for Two in the Menger game on $Y$.

As in Corollary \ref{menger->alster}, we can give a proof for this
last fact that does not rely on the equivalence between $(b)$ and
$(c)$ of Theorem \ref{telg-topsoe}.

\begin{proof}[A direct proof for Example \ref{Ytelg}]

For each $p\in Y$ and each
$\alpha\in\omega_1$, we shall write $V(p,\alpha)=\{y\in
Y:y\!\upharpoonright\!\alpha=p\!\upharpoonright\!\alpha\}\in\tau$.

Let $\sigma$ be a strategy for Two in the Menger game on $Y$.
By expanding the answers of  Two if necessary, we may
regard $\sigma$ as a function
$\sigma:(\mbox{}^{<\omega}\lim(\omega_1))\setminus\{\emptyset\}\rightarrow[\omega_1]^{<\aleph_0}$,
meaning that, if One gives an open cover $\{V(y,\alpha):y\in Y\}$ of
$Y$ with $\alpha\in\lim(\omega_1)$ --- note that, as $Y$ is Lindel\"of
and $\omega_1$ is regular, any open cover of $Y$ has an open
refinement of this form ---, Two responds by choosing, for some
$F\in[\alpha]^{<\aleph_0}$, the open sets $V(y,\alpha)$ with
$y\in\{\chi_G:G\subseteq F\}\cup\{\chi_{C_\gamma}:\gamma\in
F\cap\lim(\omega_1)\}\cup\{\chi_{C_\alpha}\}$.

For each $t\in\mbox{}^{<\omega}\lim(\omega_1)$, we have
$\max(\sigma(t^\frown\alpha))<\alpha$ for all
$\alpha\in\lim(\omega_1)$; thus, it follows from Fodor's Lemma
(\cite[Theorem 2]{fodor}; see e.g. \cite[Theorem 21.12]{jw2}) that
there exist $\beta_t\in\omega_1$ and a stationary set
$S_t\subseteq\lim(\omega_1)$ such that, for all $\alpha\in S_t$, we
have $\max(\sigma(t^\frown\alpha))=\beta_t$. Let $M$ be a countable
elementary submodel of $H_\theta$ for a convenient choice of $\theta$
(see e.g. \cite{dow} or \cite[Chapter 24]{jw2}) such that
$Y,\tau,\sigma\in M$, and consider
$\lambda=M\cap\omega_1\in\lim(\omega_1)$. By elementarity, it follows
that, for each $t\in\mbox{}^{<\omega}\lim(\lambda)$, there exist
$\beta_t\in\lambda$ and an unbounded subset $S_t$ of $\lambda$ with
$S_t\subseteq\lim(\lambda)$ such that
$\max(\sigma(t^\frown\alpha))=\beta_t$ for all $\alpha\in S_t$.

We shall now prove that $\sigma$ is a not a winning strategy by
showing that One can prevent the point $\chi_{C_\lambda}\in Y$ from
being covered if Two plays according to $\sigma$.

In order to accomplish this, One starts by picking $\xi_0\in
C_\lambda$ with $\beta_\emptyset<\xi_0$, and then plays $\alpha_0\in
S_\emptyset$ such that $\xi_0<\alpha_0$ --- which, we recall, is short
for saying that she plays the open cover $\{V(y,\alpha):y\in
Y\}$. Since Two follows the strategy $\sigma$, he responds with
$\sigma((\alpha_0))\in[\lambda]^{<\aleph_0}$; now One picks $\xi_1\in
C_\lambda$ satisfying $\beta_{(\alpha_0)}<\xi_1$, and then plays
$\alpha_1\in S_{(\alpha_0)}$ with $\xi_1<\alpha_1$. In general, in the
$n$-th inning, if $t_n=(\alpha_k)_{k<n}$ is the sequence of One's
moves so far, she picks $\xi_n\in C_\lambda$ such that
$\beta_{t_n}<\xi_n$, and then plays $\alpha_n\in S_{t_n}$ with
$\xi_n<\alpha_n$. It is clear that the point $\chi_{C_\lambda}\in Y$
is not covered in any of the innings, since for all $n\in\omega$ we
have $\max\sigma((\alpha_0,\dots,\alpha_n))<\xi_n<\alpha_n$ and
$\chi_{C_\lambda}(\xi_n)=1$.
\end{proof}

\begin{example}
\label{R-ctbl}

There is a Hausdorff non-regular space $X$ such that $\mathsf
S_1(\mathcal K_X,\mathcal O_X)$ fails and yet Two has a winning
strategy in the Menger game on $X$ --- in particular, $X$ is a
Menger space.

\end{example}

This is the space $X$ obtained by taking the real line $\mathbb R$
(with the usual topology) and then declaring every countable subset
closed. Since every compact subset of $X$ is finite, it follows
from Theorem 17 of \cite{sch1} that $\mathsf S_1(\mathcal K_X,\mathcal
O_X)$ is equivalent to $S_1(\mathcal O_X,\mathcal O_X)$, which does
not hold since $\mathbb R$ is not a Rothberger space.

Now write $\{2k+1:k\in\omega\}=\dot\bigcup_{j\in\omega}A_j$ with
$|A_j|=\aleph_0$ for each $j\in\omega$, and let $\varrho$ be a winning
strategy for  Two in the Menger game played on the real line
with the usual topology --- such a strategy exists since $\mathbb R$
is $\sigma$-compact. We may assume that, in the Menger game on $X$,
One only plays covers constituted by basic open sets of the form
$U\setminus C$, where $U$ is open in $\mathbb R$ and
$C\subseteq\mathbb R$ is countable; for each such basic open set $W$,
fix $U(W)$ open in $\mathbb R$ and $C(W)\in[\mathbb R]^{\le\aleph_0}$
with $W=U(W)\setminus C(W)$; then, for each basic open cover $\mathcal
W$ of $X$, define $\mathcal U(\mathcal W)=\{U(W):W\in\mathcal
W\}\in\mathcal O_\mathbb R$.

We shall now describe a winning strategy for  Two in the Menger
game on $X$. In each even inning $2k\in\omega$, if $(\mathcal
W_i)_{i\le 2k}$ is the sequence of open covers played by One so far,
Two responds with $\mathcal F_{2k}\in[\mathcal W_{2k}]^{<\aleph_0}$
such that
$\varrho((\mathcal U(\mathcal W_{2i}))_{i\le k})=\{U(W):W\in\mathcal
F_{2k}\}$
--- i.e., $\{U(W):W\in\mathcal F_{2k}\}$ is Two's answer to the
sequence $(\mathcal U(\mathcal W_{2i}))_{i\le k}$ in the Menger game
on $\mathbb R$ according to the strategy $\varrho$. Now Two makes use
of the innings in $A_k$ to cover the countably many points in
$\bigcup_{W\in\mathcal F_{2k}}C(W)$. The fact that $\varrho$ is a
winning strategy for Two in the Menger game on $\mathbb R$ guarantees
that $X$ will be covered by Two through this procedure.

\begin{example}
\label{luzin-michael}

If there is a Luzin subset of the real line,\footnote{That is, an
  uncountable set $L\subseteq\mathbb R$ such that $L\cap A$ is
  countable for every nowhere dense subset $A$ of $\mathbb R$. The
  Continuum Hypothesis implies the existence of a Luzin set
  \cite[Theorem 1]{luzin}.} then there is a Hausdorff non-regular
space $X$ that is Lindel\"of scattered and such that $X_\delta$ is not
Lindel\"of.

\end{example}

Let $L\subseteq\mathbb R$ be a Luzin set, which we may assume to
consist only of irrational numbers.
%; we may assume that $L$ consists only of irrational numbers and is
%dense in $\mathbb R$.
On the set $X=L\cup\mathbb Q$, consider the topology in which every
point of $L$ is isolated and basic neighbourhoods of $q\in\mathbb Q$
are of the form $\{q\}\cup\{x\in L:|x-q|<\frac{1}{n+1}\}$ for
$n\in\omega$. It is clear that $X$ is scattered and that $X_\delta$,
being discrete and uncountable, is not Lindel\"of. Yet $X$ is
Lindel\"of: from any open cover $\mathcal U$ of $X$, we can extract a
countable subset $\mathcal U_0$ that covers $\mathbb Q$; as $L$ is a
Luzin set, $\mathcal U_0$ leaves only countably many points of $L$
uncovered.

\begin{defin}
\label{def.R-cover}

An open cover $\mathcal U$ of a topological space $X$ is an
\emph{$R$-cover} if every Rothberger subspace of $X$ is
included in some element of $\mathcal U$. The set of all $R$-covers of
$X$ will be denoted by $\mathcal R_X$.

\end{defin}

The following result is a straightforward generalization of the
implication $(3)\rightarrow(1)$ of Theorem 17 of \cite{sch1}; its
proof is essentially the same.

\begin{prop}
\label{S1(R,O)}

A topological space $X$ satisfies $\mathsf S_1(\mathcal R,\mathcal O)$
if and only if $X$ is a Rothberger space.

\end{prop}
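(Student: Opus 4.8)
The plan is to prove the two implications separately, the backward one being immediate and the forward one being the heart of the matter. For the direction ``Rothberger $\Rightarrow\mathsf S_1(\mathcal R_X,\mathcal O_X)$'' I would simply note that every $R$-cover is in particular an open cover, so $\mathcal R_X\subseteq\mathcal O_X$; since $\mathsf S_1(\mathcal A,\mathcal B)$ only gets easier to satisfy as the family $\mathcal A$ shrinks, the Rothberger property $\mathsf S_1(\mathcal O_X,\mathcal O_X)$ immediately yields $\mathsf S_1(\mathcal R_X,\mathcal O_X)$.

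For the forward direction I would follow the template of Scheepers' implication $(3)\rightarrow(1)$ of Theorem 17, the only change being that the role played there by finite subsets of $X$ is now played by Rothberger subspaces. Let $(\mathcal U_n)_{n\in\omega}$ be an arbitrary sequence of open covers of $X$; the goal is a selection $U_n\in\mathcal U_n$ with $X=\bigcup_n U_n$. First I would fix a partition $\omega=\bigcup_{k\in\omega}I_k$ into infinitely many pairwise disjoint infinite sets, enumerated increasingly as $I_k=\{i_k(0)<i_k(1)<\cdots\}$. For each $k$ I would then form the ``diagonal'' family
\[
\mathcal V_k=\Big\{\,\bigcup_{j\in\omega}f(j):f\in\prod_{j\in\omega}\mathcal U_{i_k(j)}\,\Big\},
\]
whose members are the open sets obtained by choosing one element from each cover in the block $I_k$ and taking the union.

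The key step --- and the one I expect to be the main obstacle --- is the claim that each $\mathcal V_k$ is an $R$-cover of $X$. Checking that $\mathcal V_k$ is an open cover is routine (any $x$ lies in some $f(0)$). The substance is that every Rothberger subspace $R\subseteq X$ is contained in a single member of $\mathcal V_k$: restricting the block $(\mathcal U_{i_k(j)})_{j\in\omega}$ to $R$ gives a sequence of open covers of $R$, and since $R$ is Rothberger there is a choice $f(j)\in\mathcal U_{i_k(j)}$ with $R\subseteq\bigcup_j f(j)$, i.e. $R$ sits inside the corresponding element of $\mathcal V_k$. This is exactly the point where ``$R$ is Rothberger'' replaces ``$F$ is finite'' in the $\omega$-cover version of the argument, and it is what makes the construction land in $\mathcal R_X$ rather than merely in the family of $\omega$-covers.

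With this in hand the rest is bookkeeping. Applying the hypothesis $\mathsf S_1(\mathcal R_X,\mathcal O_X)$ to the sequence $(\mathcal V_k)_{k\in\omega}$ produces $V_k\in\mathcal V_k$ with $X=\bigcup_k V_k$; writing $V_k=\bigcup_j f_k(j)$ and, for each $n\in\omega$, letting $k$ and $j$ be determined by $n=i_k(j)$, I would set $U_n=f_k(j)\in\mathcal U_n$. Because the blocks $I_k$ partition $\omega$ and the indices within each block are distinct, this assigns exactly one member to each $\mathcal U_n$, and $\bigcup_n U_n=\bigcup_k V_k=X$, so $X$ is Rothberger. I would close by remarking that running the same construction with finite subsets in place of Rothberger subspaces recovers Scheepers' original equivalence, confirming that the proof is indeed ``essentially the same''.
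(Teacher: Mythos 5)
Your proof is correct and is exactly the argument the paper intends: the paper's own ``proof'' consists of pointing to the implication $(3)\rightarrow(1)$ of Theorem 17 of \cite{sch1} and noting that the proof is essentially the same, namely the block-partition argument in which Rothberger subspaces and countable unions replace the finite sets and finite unions of the $\omega$-cover version --- which is precisely the construction you carried out. Both directions (the trivial one via $\mathcal R_X\subseteq\mathcal O_X$ and the diagonal construction of the $R$-covers $\mathcal V_k$ with the decoding $U_n=f_k(j)$ for $n=i_k(j)$) check out with no gaps.
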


\begin{corol}
\label{C->R}

Let $X$ be a topological space such that every compact subspace of $X$
is Rothberger. Then $X$ satisfies $\mathsf S_1(\mathcal K,\mathcal O)$
if and only if $X$ is a Rothberger space.\footnote{This has also been
observed independently in \cite{bab}.}

\end{corol}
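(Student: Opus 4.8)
The plan is to deduce Corollary~\ref{C->R} directly from Proposition~\ref{S1(R,O)} by comparing the two classes of covers $\mathcal K_X$ and $\mathcal R_X$ under the hypothesis that every compact subspace of $X$ is Rothberger. The key observation is that this hypothesis forces $\mathcal R_X\subseteq\mathcal K_X$: indeed, if $\mathcal U$ is an $R$-cover, then by definition every Rothberger subspace of $X$ lies in some element of $\mathcal U$; since each compact subspace is assumed to be Rothberger, it follows that every compact subspace lies in some element of $\mathcal U$, so $\mathcal U$ is a $k$-cover. Thus under the hypothesis we have the inclusion $\mathcal R_X\subseteq\mathcal K_X$ (no separation axioms needed).

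With this inclusion in hand, I would argue the equivalence as follows. The ``if'' direction is the easy one: suppose $X$ is Rothberger. By Proposition~\ref{S1(R,O)}, $X$ satisfies $\mathsf S_1(\mathcal R_X,\mathcal O_X)$; but since every $k$-cover is in particular an open cover, and since we are selecting into $\mathcal O_X$ regardless, the selection principle $\mathsf S_1(\mathcal K_X,\mathcal O_X)$ follows from $\mathsf S_1(\mathcal R_X,\mathcal O_X)$ precisely because $\mathcal R_X\subseteq\mathcal K_X$ makes $\mathsf S_1(\mathcal K,\mathcal O)$ the formally \emph{weaker} statement (fewer sequences to handle when the input family is smaller). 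To be careful about the direction of the monotonicity: $\mathsf S_1(\mathcal A,\mathcal B)$ is weaker when $\mathcal A$ is \emph{smaller}, so from $\mathcal R_X\subseteq\mathcal K_X$ I actually get that $\mathsf S_1(\mathcal K,\mathcal O)\Rightarrow\mathsf S_1(\mathcal R,\mathcal O)$, and I should run the argument in that direction instead.

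Accordingly, for the substantive implication I would proceed: assume $X$ satisfies $\mathsf S_1(\mathcal K_X,\mathcal O_X)$. Given any sequence $(\mathcal U_n)_{n\in\omega}$ of $R$-covers, the inclusion $\mathcal R_X\subseteq\mathcal K_X$ shows each $\mathcal U_n$ is a $k$-cover, so I may apply $\mathsf S_1(\mathcal K_X,\mathcal O_X)$ to this sequence to extract $U_n\in\mathcal U_n$ with $\{U_n:n\in\omega\}\in\mathcal O_X$. This exhibits $\mathsf S_1(\mathcal R_X,\mathcal O_X)$, whence $X$ is Rothberger by Proposition~\ref{S1(R,O)}. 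The converse implication ``Rothberger $\Rightarrow\mathsf S_1(\mathcal K,\mathcal O)$'' is then immediate: every Rothberger space satisfies $\mathsf S_1(\mathcal O,\mathcal O)$, and since $\mathcal K_X\subseteq\mathcal O_X$, restricting attention to $k$-covers only weakens the requirement, so $\mathsf S_1(\mathcal K_X,\mathcal O_X)$ holds as well.

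The only genuine point requiring care—the ``main obstacle,'' though it is mild—is keeping the monotonicity of $\mathsf S_1$ straight in both its arguments, since the two inclusions $\mathcal R_X\subseteq\mathcal K_X$ and $\mathcal K_X\subseteq\mathcal O_X$ pull in opposite-looking directions and it is easy to invoke the wrong one. Once the single inclusion $\mathcal R_X\subseteq\mathcal K_X$ (which is exactly where the hypothesis ``every compact subspace is Rothberger'' enters) is established, everything reduces to Proposition~\ref{S1(R,O)} together with the elementary fact that $\mathsf S_1(\mathcal A,\mathcal B)$ is preserved when $\mathcal A$ shrinks. I expect the whole proof to be two or three lines once phrased correctly.
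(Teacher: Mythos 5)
Your proposal is correct and matches the paper's own proof: the paper likewise observes that the hypothesis gives $\mathcal R_X\subseteq\mathcal K_X$, deduces $\mathsf S_1(\mathcal K_X,\mathcal O_X)\Rightarrow\mathsf S_1(\mathcal R_X,\mathcal O_X)$ by monotonicity of $\mathsf S_1$ in its first argument, and concludes via Proposition~\ref{S1(R,O)}, with the trivial converse (Rothberger $=\mathsf S_1(\mathcal O,\mathcal O)$ implies $\mathsf S_1(\mathcal K,\mathcal O)$ since $\mathcal K_X\subseteq\mathcal O_X$) left implicit. Your mid-proof self-correction lands on exactly the right monotonicity, so aside from that detour the argument is the same.
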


\begin{proof}

Since in this case we have $\mathcal R_X\subseteq\mathcal K_X$, it
follows that $\mathsf S_1(\mathcal K_X,\mathcal O_X)$ implies $\mathsf
S_1(\mathcal R_X,\mathcal O_X)$ --- which, by Proposition
\ref{S1(R,O)}, is equivalent to $X$ being Rothberger.
\end{proof}

\begin{corol}
\label{C->scat}

Let $X$ be a topological space every compact subspace of which
has an isolated point. Then $X$ satisfies $\mathsf S_1(\mathcal
K,\mathcal O)$ if and only if $X$ is a Rothberger space.

\end{corol}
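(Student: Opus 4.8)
The plan is to reduce this statement to Corollary \ref{C->R}, whose hypothesis is that \emph{every compact subspace of $X$ is Rothberger}. Since the present hypothesis --- that every compact subspace merely has an isolated point --- is formally weaker, the entire content of the argument lies in showing that it already forces every compact subspace to be Rothberger; once that is established, Corollary \ref{C->R} delivers the conclusion verbatim.

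First I would show that every compact subspace $K$ of $X$ is \emph{scattered}. Let $Y\subseteq K$ be nonempty and let $\overline Y$ denote its closure in $K$. Being closed in the compact space $K$, the set $\overline Y$ is itself a compact subspace of $X$, so by hypothesis it has an isolated point $p$; that is, $\{p\}$ is open in $\overline Y$. Because $Y$ is dense in $\overline Y$, this nonempty open set must meet $Y$, forcing $p\in Y$, and then $\{p\}=\{p\}\cap Y$ is open in $Y$. Thus $p$ is isolated in $Y$, so every nonempty subspace of $K$ has an isolated point and $K$ is scattered. I would remark that this step uses no separation axioms: closedness alone gives compactness of $\overline Y$, and the density argument is purely topological.

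Next I would invoke Proposition \ref{comp-scattered}, which provides Two with a winning strategy in the Rothberger game on any compact scattered space; since a single game cannot admit winning strategies for both players, One then has no winning strategy, and Theorem \ref{pawl} yields that every compact scattered space is Rothberger. Combining this with the previous step, every compact subspace of $X$ is Rothberger, and Corollary \ref{C->R} finishes the proof.

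The main obstacle is precisely the passage from ``every compact subspace has an isolated point'' to ``every compact subspace is scattered'': an isolated point of the \emph{closure} $\overline Y$ need not \emph{a priori} belong to $Y$, and the crux is the density observation that pins it down inside $Y$ itself. Everything after that reduction is a direct appeal to results already in hand.
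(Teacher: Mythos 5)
Your proof is correct and follows essentially the same route as the paper, which reduces the statement to Corollary \ref{C->R} via the observation that every compact scattered space is Rothberger. You merely spell out two steps the paper leaves implicit: the standard closure-and-density argument showing that the hypothesis forces compact subspaces to be scattered, and the derivation of ``compact scattered implies Rothberger'' from Proposition \ref{comp-scattered} together with Theorem \ref{pawl}.
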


\begin{proof}

This follows directly from Corollary \ref{C->R}, since every compact
scattered space is Rothberger.
\end{proof}

\begin{example}
\label{cov<d}

If $\mathrm{cov}(\mathcal M)<\mathfrak d$, then there is a Menger
regular space that does not satisfy $\mathsf S_1(\mathcal K,\mathcal
O)$.

\end{example}

It follows from Theorem 5 of \cite{fm} that $\mathrm{cov}(\mathcal
M)$ is the least cardinality of a non-Rothberger subspace of the real
line; let then $X\subseteq\mathbb R$ be such a subspace. As
$|X|<\mathfrak d$, it follows from Theorem 5 of \cite{folgen} (see
also \cite[Theorem 3]{fm}) that $X$ is a Menger space. By the
\v Cech-Posp\'i\v sil Theorem (\cite{cechp}; see e.g.
\cite[Theorem 7.19]{hodel}), any compact subspace of $X$ without
isolated points would have size at least $\mathfrak c\ge\mathfrak
d>|X|$, which is impossible; thus, by Corollary \ref{C->scat}, $X$
does not satisfy $\mathsf S_1(\mathcal K,\mathcal O)$.

\begin{example}
\label{sierp+sorg}

If there is a Sierpi\'nski subset of the real line,\footnote{That is,
  an uncountable set $S\subseteq\mathbb R$ such that $S\cap A$ is
  countable for every (Lebesgue) measure zero subset $A$ of $\mathbb
  R$. The Continuum Hypothesis implies the existence of a Sierpi\'nski
  set \cite[4.6]{sierp24}.}
then there is a Menger regular space that does not satisfy $\mathsf
S_1(\mathcal K,\mathcal O)$.

\end{example}

Pick a Sierpi\'nski set $S\subseteq\mathbb R$ and endow it with the
Sorgenfrey line topology. By Corollary 3.6 of \cite{sakai}, $S$ is
Menger. Since $S$ does not have measure zero, it cannot be Rothberger
\cite{roth}; thus, as every compact subset of the Sorgenfrey line
is countable, it follows from Corollary \ref{C->R} that $S$ does not
satisfy $\mathsf S_1(\mathcal K,\mathcal O)$.\\

In view of Examples \ref{cov<d} and \ref{sierp+sorg}, it is natural to
ask:

\begin{prob}
\label{zfc.dual?}

Is it consistent with $\mathrm{ZFC}$ that every Menger regular space
satisfies $\mathsf S_1(\mathcal K,\mathcal O)$?

\end{prob}

We conjecture that the answer is negative. Note that this could be
proven by means of a dichotomic argument, e.g. by showing that a
counterexample exists under $\mathrm{cov}(\mathcal M)=\mathfrak d$.
Should this be the case, one might still ask:

\begin{prob}
\label{zfc?}

Is there a $\mathrm{ZFC}$ example of a Menger regular space that does
not satisfy $\mathsf S_1(\mathcal K,\mathcal O)$?

\end{prob}

We point out that a set of reals satisfying these conditions
would have to be, in particular, a $\mathrm{ZFC}$ example
of a non-$\sigma$-compact Menger subspace of $\mathbb R$
--- a kind of set that only recently has been constructed
(see Theorem 16 in \cite{bart-tsaban}).
Note also that, if the regularity requirement is dropped, then Example
\ref{R-ctbl} answers Problem \ref{zfc?} in the affirmative.

%\section*{Acknowledgements}
%
%We would like to thank Franklin D. Tall, Boaz Tsaban and Lyubomir
%Zdomskyy for helpful comments on previous versions of this paper.


\begin{thebibliography}{50}












\bibitem{alster}
K. Alster, \emph{On the class of all spaces of weight not greater than
$\omega_1$ whose cartesian product with every Lindel\"of space is
  Lindel\"of}, Fund. Math. {\bf 129} (1988), 133--140.


\bibitem{arh}
A. V. Arkhangel'skii, \emph{On some topological spaces that occur in
  functional analysis}, Uspekhi Mat. Nauk {\bf 31}:5 (1976), 17--32
(Russian Math. Surveys {\bf 31}:5 (1976), 14--30).


\bibitem{aurichi}
L. F. Aurichi, \emph{$D$-spaces, topological games, and selection
  principles}, Topology Proc. {\bf 36} (2010), 107--122.


\bibitem{bab}
L. Babinkostova, B. A. Pansera and M. Scheepers, \emph{Weak covering
  properties and selection principles}, preprint.


\bibitem{bz}
T. Banakh and L. Zdomskyy, \emph{Selection principles and infinite
  games on multicovered spaces}, Selection principles and covering
properties in topology (L. D. R. Ko\v cinac, ed.), Quad. Mat. 18,
Dept. Math., Seconda Univ. Napoli, Caserta, 2006, 1--51.


\bibitem{barr2}
M. Barr, J. F. Kennison and R. Raphael, \emph{On productively
  Lindel\"of spaces}, Sci. Math. Jpn. {\bf 65} (2007), 319--332.


\bibitem{bart-tsaban}
T. Bartoszy\'nski and B. Tsaban, \emph{Hereditary topological
  diagonalizations and the Menger-Hurewicz conjectures},
Proc. Amer. Math. Soc. {\bf 134} (2006), 605--615.


\bibitem{cechp}
E. \v Cech and B. Posp\'i\v sil, \emph{Sur les espaces compacts},
Publ. Fac. Sci. Univ. Masaryk {\bf 258} (1938), 1--7.


\bibitem{dow}
A. Dow, \emph{An introduction to applications of elementary submodels
  to topology}, Topology Proc. {\bf 13} (1988), 17--72.


%\bibitem{eng}
%R. Engelking, \emph{General topology}, Heldermann, Berlin, 1989.


\bibitem{fodor}
G. Fodor, \emph{Eine Bemerkung zur Theorie der regressiven
  Funktionen}, Acta Sci. Math. Szeged {\bf 17} (1956),
139--142.


\bibitem{fm}
D. H. Fremlin and A. W. Miller, \emph{On some properties of Hurewicz,
  Menger, and Rothberger}, Fund. Math. {\bf 129} (1988), 17--33.


\bibitem{galvin}
F. Galvin, \emph{Indeterminacy of point-open games},
Bull. Acad. Pol. Sci., S\'er. Sci. Math. Astron. Phys. {\bf 26}
(1978), 445--449.


\bibitem{gerlits}
J. Gerlits and Z. Nagy, \emph{Some properties of $C(X)$, I}, Topology
Appl. {\bf 14} (1982), 151--161.


%\bibitem{gewand}
%M. E. Gewand, \emph{The Lindel\"of degree of scattered spaces and
%  their products}, J. Austral. Math. Soc. Ser. A {\bf 37} (1984),
%98--105.


\bibitem{hodel}
R. Hodel, \emph{Cardinal functions I}, Handbook of set-theoretic
topology (K. Kunen and J. E. Vaughan, eds.), North-Holland, Amsterdam,
1984, 1--61.


\bibitem{hur}
W. Hurewicz, \emph{\"Uber eine Verallgemeinerung des Borelschen
  Theorems}, Math. Z. {\bf 24} (1926), 401--421.


\bibitem{folgen}
W. Hurewicz, \emph{\"Uber Folgen stetiger Funktionen},
Fund. Math. {\bf 9} (1927), 193--204.


%\bibitem{just}
%W. Just, A. W. Miller, M. Scheepers and P. J. Szeptycki,
%\emph{The combinatorics of open covers II}, Topology Appl. {\bf 73}
%(1996), 241--266.


\bibitem{jw2}
W. Just and M. Weese, \emph{Discovering modern set theory. II},
Graduate studies in mathematics, Volume 18, American Mathematical
Society, Providence, 1997.


\bibitem{lawrence}
L. B. Lawrence, \emph{The influence of a small cardinal on the
  product of a Lindel\"of space and the irrationals},
Proc. Amer. Math. Soc. {\bf 110} (1990), 535--542.


\bibitem{rice}
R. Levy and M. D. Rice, \emph{Normal $P$-spaces and the
  $G_\delta$-topology}, Colloq. Math. {\bf 44} (1981), 227--240.


\bibitem{luzin}
N. Lusin, \emph{Sur un probl\`eme de M. Baire}, C. R. Acad. Sci. Paris
{\bf 158} (1914), 1258--1261.


\bibitem{michael}
E. A. Michael, \emph{Paracompactness and the Lindel\"of property in
  finite and countable cartesian products}, Compos. Math. {\bf 23}
(1971), 199--214.


\bibitem{moore}
J. T. Moore, \emph{Some of the combinatorics related to Michael's
  problem}, Proc. Amer. Math. Soc. {\bf 127} (1999), 2459--2467.


\bibitem{jmooreL}
J. T. Moore, \emph{A solution to the L space problem},
J. Amer. Math. Soc. {\bf 19} (2006), 717--736.


\bibitem{pawl}
J. Pawlikowski, \emph{Undetermined sets of point-open games},
Fund. Math. {\bf 144} (1994), 279--285.


\bibitem{przy}
T. C. Przymusi\'nski, \emph{Products of normal spaces}, Handbook of
set-theoretic topology (K. Kunen and J. E. Vaughan, eds.),
North-Holland, Amsterdam, 1984, 781--826.


\bibitem{repovs}
D. Repov\v s and L. Zdomskyy, \emph{On the Menger covering property
  and $D$ spaces}, Proc. Amer. Math. Soc. {\bf 140} (2012),
1069--1074.


\bibitem{roth}
F. Rothberger, \emph{Eine Versch\"arfung der Eigenschaft $C$},
Fund. Math. {\bf 30} (1938), 50--55.


\bibitem{sakai}
M. Sakai, \emph{Menger subsets of the Sorgenfrey line},
Proc. Amer. Math. Soc. {\bf 137} (2009), 3129--3138.


\bibitem{direct}
M. Scheepers, \emph{A direct proof of a theorem of Telg\'arsky},
Proc. Amer. Math. Soc. {\bf 123} (1995), 3483--3485.


\bibitem{sch1}
M. Scheepers, \emph{Combinatorics of open covers I: Ramsey theory},
Topology Appl. {\bf 69} (1996), 31--62.


\bibitem{sch3}
M. Scheepers, \emph{Combinatorics of open covers (III): games,
  $C_p(X)$}, Fund. Math. {\bf 152} (1997), 231--254.


\bibitem{sch6}
M. Scheepers, \emph{Combinatorics of open covers VI: Selectors for
  sequences of dense sets}, Quaest. Math. {\bf 28} (2005),
409--424.


\bibitem{schtall}
M. Scheepers and F. D. Tall, \emph{Lindel\"of indestructibility,
  topological games and selection principles}, Fund. Math. {\bf 210}
(2010), 1--46.


\bibitem{sierp24}
W. Sierpi\'nski, \emph{Sur l'hypoth\`ese du continu
  ($2^{\aleph_0}=\aleph_1$)}, Fund. Math. {\bf 5} (1924), 177--187.


\bibitem{tallD}
F. D. Tall, \emph{Productively Lindel\"of spaces may all be $D$},
Canad. Math. Bull. {\bf 56} (2013), 203--212.


\bibitem{telg1}
R. Telg\'arsky, \emph{Spaces defined by topological games},
Fund. Math. {\bf 88} (1975), 193--223.


\bibitem{telg2}
R. Telg\'arsky, \emph{Spaces defined by topological games, II},
Fund. Math. {\bf 116} (1983), 189--207.


\bibitem{topsoe}
R. Telg\'arsky, \emph{On games of Tops\o e}, Math. Scand. {\bf 54}
(1984), 170--176.


\bibitem{tz}
B. Tsaban and L. Zdomskyy, \emph{Arhangel'ski\u\i\mbox{} sheaf
  amalgamations in topological groups}, preprint, 
arXiv:1103.4957v1




\end{thebibliography}
\end{document}